\DeclareSymbolFont{SY}{U}{psy}{m}{n}
\DeclareMathSymbol{\emptyset}{\mathord}{SY}{'306}
\newcommand{\cal}{\mathcal}
\newcommand{\bbC}{{\mathbb C}}
\newcommand{\bbR}{{\mathbb R}}
\newcommand{\cO}{{\mathcal O}}
\newcommand{\sE}{{\sf E}}
\newcommand{\fA}{\mathfrak{A}}
\newcommand{\fH}{\mathfrak{H}}
\newcommand{\fK}{\mathfrak{K}}
\newcommand{\fL}{\mathfrak{L}}
\newcommand{\fM}{\mathfrak{M}}
\newcommand{\fN}{\mathfrak{N}}
\newcommand{\dist}{\mathop{\rm dist}}
\newcommand{\lal}{{\langle}}
\newcommand{\ral}{{\rangle}}
\newcommand{\Dom}{\mathop{\mathrm{Dom}}}
\newcommand{\Ran}{\mathop{\mathrm{Ran}}}
\DeclareMathOperator{\spec}{spec}
\DeclareSymbolFont{SY}{U}{psy}{m}{n}
\DeclareMathSymbol{\emptyset}{\mathord}{SY}{'306}
\newcommand{\R}{{\Bbb R}}
\newcommand{\C}{{\Bbb C}}
\newcommand{\cB}{{\cal B}}
\newcommand{\cG}{{\cal G}}
\newcommand{\cV}{{\cal V}}
\newcommand{\ri}{{\rm i}}
\newcommand{\wY}{\widehat{Y}}
\newcommand{\wZ}{\widehat{Z}}
\newcommand{\Inf}{\mathop{\rm inf}}
\newcommand{\Sup}{\mathop{\rm sup}}
\newcommand{\diag}{\mathop{\rm diag}}
\newcommand{\Img}{\mathop{\rm Im}}
\newcommand{\Real}{\mathop{\rm Re}}
\numberwithin{equation}{section}
\newtheorem{theorem}{Theorem}[section]
\newtheorem{corollary}[theorem]{Corollary}
\newtheorem{lemma}[theorem]{Lemma}
\newtheorem{hypothesis}[theorem]{Hypothesis}
\theoremstyle{definition}
\theoremstyle{remark}
{\it}{\rm}
\newtheorem{remark}[theorem]{Remark}
\begin{document}
\title[ON INVARIANT GRAPH SUBSPACES OF A $J$-SELF-ADJOINT OPERATOR]
{ON INVARIANT GRAPH SUBSPACES OF A $J$-SELF-ADJOINT OPERATOR\\ IN THE FESHBACH CASE$^*$}%
%%{On invariant graph subspaces of a $J$-self-adjoint operator in the Feshbach case}
%\thanks{Based on a talk given by the second author at the 4th
%Najman Conference on Spectral Problems, Opatija, September 20--25,
%2015.}\,\,%
\thanks{$^*$Support of this research by Deutsche Forschungsgemeinschaft,
Russian Foundation for Basic Research, and the Heisenberg-Landau
Program is gratefully acknowledged. The work has also been supported
by St.\,Pe\-ter\-s\-burg State University (grant \#
11.38.241.2015).}

\author[S. Albeverio and A. K. Motovilov]
{Sergio Albeverio and  Alexander K. Motovilov}

\address{Sergio Albeverio,
Institut f\"ur Angewandte Mathematik und HCM, Uni\-ver\-si\-t\"at Bonn,
Endenicher Allee 60, D-53115 Bonn, Germany}
\email{albeverio@uni-bonn.de}

\address{Alexander K. Motovilov, Bogoliubov Laboratory of
Theoretical Physics, JINR, Joliot-Cu\-rie 6, 141980 Dubna, Moscow
Region, Russia} \email{motovilv@theor.jinr.ru}

%\date{Version: JRes-23.tex of January 23, 2016; time 20:46}

%%\subjclass[2000]{Primary 47A15, 47A62; Secondary 47B15, 47B25}

\keywords{$J$-self-adjoint operator, subspace perturbation problem,
graph subspace, operator Riccati equation, off-diagonal
perturbation, resonance}

\maketitle
\thispagestyle{empty}

\begin{quote}
We consider a $J$-self-adjoint  $2\times2$ block operator matrix $L$
in the Feshbach spectral case, that is, in the case where the
spectrum of one main-diagonal entry is embedded into the absolutely
continuous spectrum of the other main-diagonal entry. We work with
the analytic continuation of the Schur complement of a main-diagonal
entry in\, $L-z$\, to the unphysical sheets of the spectral parameter
$z$ plane. We present the conditions under which the continued Schur
complement has operator roots, in the sense of Markus-Matsaev. The
operator roots reproduce (parts of) the spectrum of the Schur
complement, including the resonances. We then discuss the case where
there are no resonances and the associated Riccati equations have
bounded solutions allowing the graph representations for the
corresponding $J$-orthogonal invariant subspaces of $L$. The
presentation ends with an explicitly solvable example.
\end{quote}
\bigskip

%%%%%%%%%%%%%%%%%%%%%%%%%%%%%%%%%%%%%%%%%%%%%%%%%%%%%%%%%%%%%%%%%%%%%%
\section{Introduction}
\label{SIntro}
%%%%%%%%%%%%%%%%%%%%%%%%%%%%%%%%%%%%%%%%%%%%%%%%%%%%%%%%%%%%%%%%%%%%%%
In this note we deal with a $2\times2$ block operator matrix of the form
\begin{equation}
\label{L}
L=\left(\begin{array}{rl}
  A_0       &   B  \\
  -B^*     &   A_1
\end{array}\right)
\end{equation}
It is assumed that $A_0$ and $A_1$ are self-adjoint operators in
Hilbert spaces $\fA_0$ and $\fA_1$, respectively, and $B$ is a
bounded operator from $\fA_1$ to $\fA_0$. The block operator matrix
$L$ is understood as an operator in the orthogonal sum
$\fH=\fA_0\oplus\fA_1$ of the Hilbert spaces $\fA_0$ and $\fA_1$,
and $\Dom(L)=\Dom(A_0)\oplus\Dom(A_1)$.

When studying operators of the form \eqref{L}, one usually
introduces the involution
\begin{equation}
\label{J}
J=\left(\begin{array}{cc}
  I       &   0  \\
 0     &   -I
\end{array}\right),
\end{equation}
where $I$ denotes the identity operator. In this case the product
$JL$ is a self-adjoint operator in $\fH$, and then the operator $L$
is called $J$-self-adjoint. Surely, the $J$-self-adjoint operator
\eqref{L} may be viewed as a perturbation, $L=A+V$, of the
block-diagonal self-adjoint operator matrix $A:=\diag(A_0,A_1)$,
$\Dom(A)=\Dom(A_0)\oplus\Dom(A_1)$, by the bounded
$J$-self-adjoint off-diagonal operator block matrix
\begin{equation}
\label{V}
V=\left(\begin{array}{cc}
  0       &   B  \\
 -B^*     &   0
\end{array}\right).
\end{equation}

The involution \eqref{J} induces an indefinite inner product
\begin{equation}
\label{IpKs} [x,y]=(Jx,y), \quad x,y\in\fH.
\end{equation}
Equipping the Hilbert space $\fH$ with the indefinite inner product
\eqref{IpKs} transforms it into a Krein space, which we denote by
$\fK$, $\fK=\{\fH,[\,\cdot\,,\,\cdot\,]\}$. Notice that if an
operator is $J$-self-adjoint in $\fH$ then it is self-adjoint in
$\fK$. In particular, the operator \eqref{L} is self-adjoint in $\fK$.
The theory of linear and, in particular, self-adjoint operators in Krein
spaces is already a deeply developed subject and for the
corresponding definitions, concepts and main results we refer
the reader, e.g., to \cite{Langer82}, \cite{Bognar}, or~\cite{AI}.
For the recent spectral results and further references see, e.g.,
\cite{ABJT} and \cite{PST}

Surely, for $B\neq 0$ the $J$-self-adjoint operator $L$ given by
\eqref{L} can not be self-adjoint in $\fH$ with respect to the
original inner product. Nevertheless, in many cases the spectrum of
such an operator is purely real and, moreover, $L$ turns out to be
similar to a self-adjoint operator. Such a situation takes place if
$L$ possesses a couple of complementary $J$-orthogonal reducing
subspaces $\fL_0$ and $\fL_1$ that are maximal uniformly definite
(respectively, positive and negative) with respect to the Krein
space inner product \eqref{IpKs} (see, e.g., the papers
\cite{AMSh08,AMT09} and references cited therein). Basically, this
happens for the case where the spectra
\begin{equation}
\label{spec}
\sigma_0:=\spec(A_0)\text{\,\, and \,\,}\sigma_1:=\spec(A_1)
\end{equation}
of the entries $A_0$ and $A_1$ are disjoint, i.e.,
\begin{equation}
\label{deldist}
\delta:=\dist(\sigma_0,\sigma_1)>0
\end{equation}
and the norm of $B$ is sufficiently small  (see \cite[Theorem
5.8]{AMSh08} or \cite[Theorem 6.1]{AMT09}): In general, we need to
have $\|V\|<\frac{\delta}{\pi}$ but if the spectral sets $\sigma_0$
and $\sigma_1$ are separated from each other by only one or two
gaps, then the sufficient condition reduces to the bound
$\|V\|<\frac{d}{2}$. Sufficient conditions for the similarity of
a $J$-self-adjoint operator to a self-adjoint one are  also known in
the case of some unbounded $B$ \cite{Veselic1,Veselic2}.

The maximal uniform definiteness of the subspaces $\fL_0$ and
$\fL_1$ suggests the existence of strictly contractive
operators $Y\in\cB(\fA_1,\fA_0)$ and $\wY=Y^*$ (see, e.g.,
\cite[Section 2]{AMT09}) such that $\fL_0$ is the graph of $\wY$
and $\fL_1$ is the graph of $Y$,
\begin{equation}
\label{Lgspaces}
\fL_0=\cG(\wY):=\{x_0\oplus\wY x_0\,|\,\, x_0\in\fA_0\},\quad
\fL_1=\cG(Y):=\{ Y x_1\oplus x_1\,|\,\, x_1\in\fA_1\}.
\end{equation}
The angular operators $Y$ and $\wY$ are strong solutions
for the pair of respective dual operator Riccati equations:
\begin{equation}
\label{RicY}
A_0Y-YA_1+YB^*Y=-B.
\end{equation}
and
\begin{equation}
\label{RicX}
\wY A_0-A_1\wY+\wY B\wY=-B^*.
\end{equation}

In the present work we are concerned with the case where
condition \eqref{deldist} fails to hold from the very beginning:
We assume that the entry $A_0$ has only absolutely continuous spectrum
and that the spectrum of $A_1$ is completely embedded into the spectrum
of $A_0$, that is,
\begin{equation}
\label{sig10}
\sigma_1\subset\sigma_0.
\end{equation}
For the case \eqref{sig10} one knows that, in general, the operator
$L$ has complex spectrum and that the following enclosure holds:
$\spec(L)\setminus\bbR\subset\{z\in\bbC\, |\,\, |\Img z|\leq\|B\|\}$
(see \cite[Theorem 5.5]{Tretter2009}). It is also known that if $A$
is bounded or semibounded then $\inf\,\,\spec(A)\leq
\Real\,\,\spec(L)\leq\sup\,\,\spec(A)$ (see \cite[Theorem
5.8]{AMT09}).

In order to study the spectral problem for the block operator matrix
\eqref{L}, we employ the Frobenius-Schur factorization
(see. e.g., \cite[Proposition 1.6.2 and Theorem
2.2.13]{TretterBook}) of the difference $L-z$, $z\not\in\sigma_0$:
\begin{equation}
\label{SchFa}
L-z=\left(\begin{array}{cc}
I & 0 \\
-B^*(A_0-z)^{-1} & I
\end{array}\right)
\left(\begin{array}{cc}
A_0-z & 0 \\
0 & M_1(z)
\end{array}\right)
\left(\begin{array}{cc}
I & (A_0-z)^{-1}B \\
0 & I
\end{array}\right),
\end{equation}
where $M_1(z)$ stands for the Schur complement of $A_0-z$,
\begin{equation}
\label{M1}
M_1(z)=A_1-z+W_1(z)
\end{equation}
with
\begin{equation}
\label{V1}
W_1(z)=B^*(A_0-z)^{-1}B,
\end{equation}
Notice that the resolvent $(L-z)^{-1}$ can be expressed explicitly
in terms of the inverse $M_1^{-1}(z)$; \eqref{SchFa} also implies
that $\spec(L)\setminus\spec(A_0)\subseteq\spec(M_1)$. Therefore, in
studying the spectral properties of the transfer function $M_1$ one
studies at the same time the spectral properties of the operator
matrix $L$.

Assuming that the absolutely continuous spectrum $\sigma_0$ consist
of the only branch presented by a finite or infinite closed interval
$\Delta_0\subset\bbR$, in Section \ref{SeSchur} we formulate
conditions on $B$ allowing to perform analytic continuation of the
Schur complement $M_1(z)$ through the cut along $\Delta_0$ to
certain domains lying on the neighboring unphysical sheets of
the spectral parameter plane. Here we follow exactly the line of the
work \cite{MennMot} in its simplified version \cite{MM-OTAA}.

Having two variants of the continued Schur complement $M_1$,
produced by crossing the cut $\Delta_0$ from $\bbC^+$ down and from
$\bbC^-$ up, for both of them in Section \ref{Sfactor} we prove the
existence of the respective operator roots $Z^{(l)}$, $l=-1$ and
$l=+1$. The spectrum of the operator root $Z^{(l)}$, $l=\pm1$, is
just the spectrum of the analytically continued Schur complement
$M_1$ lying at some neighborhood of the set $\sigma_1$. The size of
this neighborhood is determined by the strength of the operator $B$.
The spectrum of $Z^{(l)}$ along with (a part of) the spectrum of $L$
may include resonances (by which we understand the complex spectrum
of $M_1$ located in the continuation domain on the corresponding
unphysical sheet).

In Section \ref{SNoRes} we discuss the case where the operator
$Z^{(l)}$, $l=\pm1$, has no real and resonant spectrum. In this
case, under minimal additional assumptions, the operator Riccati
equations \eqref{RicY} and \eqref{RicX} are proven to be solvable.
However, unlike in the cases of disjoint spectral components
$\sigma_0$ and $\sigma_1$ considered in \cite{AMSh08,AMT09}, now the
operator $L$ has complex spectrum and the $L$-invariant graph
subspaces \eqref{Lgspaces} are not maximal uniformly
definite.

Section \ref{SFeshbach} presents an example that just fits the main
assumptions of Section \ref{SNoRes}. Namely, in Section
\ref{SFeshbach} we deal with the spectral disposition that is called
Feshbach --- in complete analogy with the celebrated similar one in
the case of quantum-mechanical Hamiltonians. We assume
that the subspace $\fA_1$ is finite-dimensional and that the
perturbation $B$ is such that it completely sweeps the eigenvalues of
$A_1$ (which are all embedded into the absolutely continuous
spectrum of $A_0$) from the real axis. However, just opposite to the
Hermitian case, the eigenvalues of $A_1$ turn not into resonances
but into the complex spectrum of $L$. The resulting operator roots
$Z^{(l)}$, $l=\pm1$, have neither real no resonance spectrum and,
thus, $J$-self-adjoint block operator matrix \eqref{L} possesses a
couple of mutually $J$-orthogonal invariant graph subspaces of the
form \eqref{Lgspaces}.

Finally, in Section \ref{SSimp} we present the simplest possible
example with $A_0$ being the operator of multiplication by independent
variable in $\fA_0=L_2(-\alpha,\alpha)$, $\alpha>0$, and $A_1=a_1$
being the multiplication by a number $a_1\in(-\alpha,\alpha)$ in
$\fA_1=\bbC$. At least for $a_1=0$, the norm of the corresponding
solutions $Y$ and $\wY=Y^*$ in this example is computed explicitly:
$\|Y\|=\|\wY\|=1$.

The following notations are used thro\-ug\-h\-o\-ut the paper. By
$\bbC^+$ and $\bbC^-$ we understand respectively the upper and lower
half-planes of the complex plane $\bbC$, e.g., $\bbC^+=\{z\in\bbC
\,\,|\,\, \Img z>0\}$. By a subspace of a Hilbert space we always
mean a closed linear subset. The Banach space of bounded linear
operators from a Hilbert space $\fM$ to a Hilbert space $\fN$ is
denoted by $\cB(\fM,\fN)$ and by $\cB(\fM)$ if $\fN=\fM$. The
notation $\sE_T(\sigma)$ is always used for the spectral projection
of a self-adjoint operator $T$ associated with a Borel set
$\sigma\subset\bbR$. By $\cO_r(\sigma)$, $r\geq 0$, we denote the
closed $r$-neigh\-bourhood of $\sigma$ in $\bbC$, i.e.\
$\cO_r(\sigma)=\{z\in\bbC\big|\,\dist(z,\sigma)\leq r\}$. We let
$\Dom(Z)$ and $\Ran(Z)$ denote the respective domain and range of a
linear operator $Z$.

\medskip

%%%%%%%%%%%%%%%%%%%%%%%%%%%%%%%%%%%%%%%%%%%%%%%%%%%%%%%%%%%%%%%%%%%%%%
\section{Analytic continuation of the Schur complement}
\label{SeSchur}
%%%%%%%%%%%%%%%%%%%%%%%%%%%%%%%%%%%%%%%%%%%%%%%%%%%%%%%%%%%%%%%%%%%%%%
In this note we restrict our
consideration to the case where all the spectrum $\sigma_0:=\spec(A_0)$ of
the entry $A_0$ is absolutely continuous and coincides with the closure of the single
interval $\Delta_0:=(\mu_0^{(1)},\mu_0^{(2)})$ with
$-\infty\leq\mu_0^{(1)}<\mu_0^{(2)}\leq\infty$. Furthermore, the
whole spectrum $\sigma_1:=\spec(A_1)$ of the entry $A_1$ is assumed to be
embedded into the interval $\Delta_0$, that is, $\sigma_1\subset
\Delta_0$.

Denote by $\sE_0$ the spectral measure of the self-adjoint operator
$A_0$ and let $\sE^0(\mu):=\sE_0\bigl((-\infty,\mu)\bigr)$ be the spectral
function of $A_0$. Then the function $W_1(z)$ can be written as
\begin{equation}
\label{W1z}
    W_1(z)=\displaystyle\int_{\sigma_0}dK_B(\mu)(\mu-z)^{-1}
\end{equation}
where
\begin{equation}
\label{KB}
  K_B(\mu)=B^*\sE^0(\mu)B.
\end{equation}
Our central assumption is that the operator-valued function
$K_B(\mu)$ is differentiable in \mbox{$\mu\in\Delta_0$} in the
operator norm topology and that the derivative $K'_B(\mu)$ admits
analytic continuation from $\Delta_0$ to a simply connected domain
$D^-$ located in $\bbC^-$. We suppose that the boundary of $D^-$
includes the entire spectral interval $\Delta_0$.  Since $K'_B(\mu)$
represents a self-adjoint operator for any $\mu\in\Delta_0$ and
$\Delta_0\subset\R$, the function $K'_B(\mu)$ also admits analytic
continuation from $\Delta_0$ into the domain $D^+$, symmetric to
$D^-$ with respect to the real axis and
$[K'_B(\mu)]^*=K'_B(\bar{\mu}),$ $\mu\in D^\pm\,.$ For the case where the end point
$\mu_0^{(k)}$, $k=1,2$, of the spectral interval $\Delta_0$ is finite
we will always suppose that
\begin{equation}
\label{KBbnd}
\|K'_B(\mu)\|\leq C|\mu-\mu_0^{(k)}|^\gamma,\quad \mu\in D_k^\pm,
\end{equation}
with some $C>0$ and $\gamma\in(-1,0]$.

In the notations like $D^+$ and $D^-$ below we will often use the
number index $l$, $l=+1$ or $l=-1$, identifying the values of $l$ in
the notation $D^l$ with the respective signs ``$+$'' or ``$-$''.

Let $\Gamma^l$, $l=\pm1$, be a rectifiable Jordan curve originating
from a continuous deformation of the interval $\Delta_0$ and lying
in $D^l$, with the (finite) end points fixed. The quantity
\begin{equation}
\label{V0G}
\cV_0(B,\Gamma^l):=\displaystyle\int_{\Gamma^l}|d\mu|\,\|K'_B(\mu)\|
\end{equation}
where $|d\mu|$ stands for the Lebesgue measure on the contour
$\Gamma^l$, is called the variation of the operator-valued function
$K_B(\mu)$ along $\Gamma^l$.  We suppose that there are contours
$\Gamma^l$ with finite $\cV_0(B,\Gamma^l)$ even in the case of
unbounded $\Delta_0$.  Jordan contours $\Gamma^l$ satisfying
condition~$\cV_0(B,\Gamma^l)<\infty$ are said to be \textit{admissible}.

We will need the following elementary statement (cf., e.g.,
\cite[Lemma 2.1]{MennMot}).

%%%%%%%%%%%%%%%%%%%%%%%%%%%%%%%%%%%%%%%%%%%%%%%%%%%%%%%%%%%%%%%%%%%%
\begin{lemma}\label{M1-Continuation}
The analytic continuation of the Schur complement $M_1(z)$,
$z\in\C\setminus{\sigma_0}$, through the spectral interval
$\Delta_0$ into the subdomain $D(\Gamma^l)\subset D^l$,
$l=\pm1$, bounded by $\Delta_0$ and an admissible contour
$\Gamma^l$ is given by
\begin{equation}
\label{Mcmpl}
    M_1(z,\Gamma^l)=A_1-z+W_1(z,\Gamma^l)
    \quad\mbox{\rm with}\quad
W_1(z,\Gamma^l)=
\int_{\Gamma^l}
d\mu\,K'_B(\mu)\,(\mu-z)^{-1}.
\end{equation}
For $z\in D^l\cap D(\Gamma^l)$ one has
\begin{equation}
\label{Sheets}
\mbox{$M_1(z,\Gamma^l)=M_1(z)-2\pi\ri\,l K'_B(z).$}
\end{equation}
\end{lemma}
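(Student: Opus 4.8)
The plan is to reduce the statement to a scalar contour-integral computation and then to invoke Cauchy's integral theorem and the residue theorem. First I would use the differentiability of $K_B$ to rewrite the Stieltjes integral in \eqref{W1z} as $W_1(z)=\int_{\Delta_0}K'_B(\mu)\,(\mu-z)^{-1}\,d\mu$ for $z\in\bbC\setminus\sigma_0$; this is legitimate because $dK_B(\mu)=K'_B(\mu)\,d\mu$ and $\sigma_0=\overline{\Delta_0}$ differs from $\Delta_0$ only by its (null) endpoints. Every integral below is a norm-convergent operator-valued integral, so Cauchy's theorem and the residue theorem apply after pairing with vectors $x,y\in\fH$ (equivalently, entrywise), and the operator nature of $K'_B$ plays no further role.

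Next I would establish \eqref{Mcmpl}. Fix $l$ and argue, for definiteness, with $l=-1$, continuing from $\bbC^+$ downward; the case $l=+1$ is the mirror image, continuing from $\bbC^-$ upward. For $z\in\bbC^+$ the pole $\mu=z$ of the integrand lies strictly above $\Delta_0$, hence outside the closed region bounded by $\Delta_0$ and $\Gamma^l\subset D^l\subset\bbC^-$. Since $K'_B$ is analytic throughout that region, Cauchy's theorem lets me deform the contour from $\Delta_0$ to $\Gamma^l$ with the (finite) endpoints held fixed, which gives $W_1(z)=\int_{\Gamma^l}K'_B(\mu)\,(\mu-z)^{-1}\,d\mu=W_1(z,\Gamma^l)$ for every $z\in\bbC^+$. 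The function $W_1(\,\cdot\,,\Gamma^l)$ is manifestly analytic on $\bbC\setminus\Gamma^l$, in particular across $\Delta_0$ and into $D(\Gamma^l)$; since it agrees with $W_1$ on the open set $\bbC^+$, it is the sought analytic continuation, and \eqref{Mcmpl} follows from \eqref{M1}.

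Then I would prove the jump relation \eqref{Sheets}. For $z\in D^l\cap D(\Gamma^l)$ the original transfer function $M_1(z)$ is the physical-sheet value furnished by the integral over $\Delta_0$, and $z$ now lies strictly between $\Delta_0$ and $\Gamma^l$. The difference $W_1(z,\Gamma^l)-W_1(z)$ is the integral of $K'_B(\mu)\,(\mu-z)^{-1}$ over $\Gamma^l$ minus the same integral over $\Delta_0$; as $\Gamma^l$ and $\Delta_0$ share their endpoints, this is precisely the integral over the closed boundary of $D(\Gamma^l)$, which encloses the single simple pole $\mu=z$ with residue $K'_B(z)$. Tracking the orientation — the closed contour $\int_{\Gamma^l}-\int_{\Delta_0}$ is positively oriented for $l=-1$ and negatively oriented for $l=+1$ — yields $W_1(z,\Gamma^l)-W_1(z)=-2\pi\ri\,l\,K'_B(z)$, which together with \eqref{Mcmpl} and \eqref{M1} is exactly \eqref{Sheets}.

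The only genuinely delicate point is the justification of the contour deformation when $\Delta_0$ is unbounded or has a finite endpoint $\mu_0^{(k)}$ at which $K'_B$ may blow up. There one must verify that the short arcs closing the contour near the endpoints, and, for an infinite $\Delta_0$, the arc at infinity, contribute nothing in the limit. Near a finite endpoint this is secured by the bound \eqref{KBbnd} with $\gamma\in(-1,0]$, which keeps $\|K'_B(\mu)\|$ integrable against $|d\mu|$ while $(\mu-z)^{-1}$ stays bounded for $z$ away from the endpoint; for the unbounded part it is the admissibility condition $\cV_0(B,\Gamma^l)<\infty$ of \eqref{V0G}, combined with the decay of $(\mu-z)^{-1}$, that secures both convergence of the integrals and the vanishing of the contributions at infinity. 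Once these estimates are in place, the two displayed identities follow exactly as indicated.
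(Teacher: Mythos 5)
Your proposal is correct and follows essentially the same route as the paper's (very terse) proof: one observes that $W_1(\cdot,\Gamma^l)$ is holomorphic on $\C\setminus\Gamma^l$ and coincides with $W_1$ off $\overline{D(\Gamma^l)}$ via contour deformation, and then obtains \eqref{Sheets} from the Residue Theorem, with your orientation bookkeeping giving the correct sign $-2\pi\ri\,l$. The additional care you take with the endpoint singularity via \eqref{KBbnd} and with admissibility for unbounded $\Delta_0$ is a welcome filling-in of details the paper leaves implicit.
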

%%%%%%%%%%%%%%%%%%%%%%%%%%%%%%%%%%%%%%%%%%%%%%%%%%%%%%%%%%%%%%
\begin{proof}
The proof is reduced to the observation
that the function $M_1(z,\Gamma^l)$ is holomorphic for
$z\in\C\setminus\Gamma^l$ and coincides with $M_1(z)$ for
$z\in\C\setminus\overline{D(\Gamma^l)}$.  The last equation
representing $M_1(z,\Gamma^l)$ via $M_1(z)$  is obtained
from~(\ref{Mcmpl}) by using the Residue Theorem.
\end{proof}

\begin{remark}
From the representation \eqref{Sheets} it follows that the Riemann
surface of the Schur complement $M_1(z)$ is  larger than the single
sheet of the spectral parameter plane. The sheet of the complex
plane $\bbC$ where the function $M_1(z)$ and the resolvent
\mbox{$(L-z)^{-1}$} are considered initially is called the physical
sheet. Formula \eqref{Sheets} implies that the domains $D^-$ and
$D^+$ are to be placed on additional  sheets of the $z$ plane that
are different from the physical sheet. We remind that these additional
sheets are usually called unphysical sheets (see, e.g.,
\cite{ReedSimonIII}).  In this work we only deal with the unphysical
sheets attached (through the interval $\Delta_0$) immediately to the
physical sheet.
\end{remark}

%%%%%%%%%%%%%%%%%%%%%%%%%%%%%%%%%%%%%%%%%%%%%%%%%%%%%%%%%%%%%%
\section{A factorization result}
\label{Sfactor}
%%%%%%%%%%%%%%%%%%%%%%%%%%%%%%%%%%%%%%%%%%%%%%%%%%%%%%%%%%%%%%
Suppose that the spectrum of a linear operator $Z\in\cB(\fA_1)$ does
not intersect an admissible contour $\Gamma\subset D^\pm$\,. Then
one can introduce the (transformator) operator
\begin{equation}
\label{W1Z}
W_1(Z,\Gamma)=-\displaystyle
\int_{\Gamma}d\mu\,K'_B(\mu)(Z-\mu)^{-1}.
\end{equation}
Clearly, if the resolvent $(Z-\mu)^{-1}$, $\mu\in\Gamma$, is
uniformly bounded on $\Gamma$, the operator $W_1(Z,\Gamma)$ is also bounded,
\begin{equation}
\label{W1ZG}
\|W_1(Z,\Gamma)\|\leq \cV_0(B,\Gamma)\,
\Sup_{\mu\in\Gamma}\|(Z-\mu)^{-1}\|\,.
\end{equation}
Below, we consider the \emph{basic} equation (cf.~\cite{MotRem})
\begin{equation}
\label{MainZ}
Z=A_1+W_1(Z,\Gamma)
\end{equation}
or, equivalently,
\begin{equation}
\label{MainEqC}
X=W_1(A_1+X,\Gamma).
\end{equation}
assuming that a solution $X$ of the latter is looked for in $\cB(\fA_1)$.

It is worth noting that if $X$ is a solution of~(\ref{MainEqC}) and
$u_1$ is an eigenvector of \mbox{$Z=A_1+X$} corresponding to an
eigenvalue $z$, $Z u_1=zu_1$, then by \eqref{W1Z} and \eqref{MainZ}
we have
\begin{equation}
\label{zZ}
zu_1=A_1
u_1+W_1(Z,\Gamma)u_1=A_1 u_1+W_1(z,\Gamma)u_1.
\end{equation}
Thus,
any eigenvalue $z$ of $Z$ is simultaneously an eigenvalue for the
continued transfer function $M_1(z,\Gamma)$ while $u_1$ is an
eigenvector of this function.

The next statement is a variant of an existence result from
\cite[Theorem 3.1]{MennMot} (cf. similar results in
\cite{HMMJOT,HMMOT}) rewritten for the $J$-self-adjoint case under
consideration.

%%%%%%%%%%%%%%%%%%%%%%%%%%%%%%%%%%%%%%%%%%%%%%%%%%%%%%%%%%%%%%%
\begin{theorem}\label{Solvability}
Assume that $\Gamma$ is an admissible contour satisfying the condition
\begin{equation}
\label{Best}
\cV_0(B,\Gamma)< \displaystyle\frac{1}{4}\,d^2(\Gamma)\,,
\end{equation}
where $d(\Gamma):=\dist\bigl(\sigma_1,\Gamma\bigr).$ Then
equation~\eqref{MainEqC} has a unique solution in any closed ball in
$\cB(\fA_1)$ consisting of operators $X$ satisfying $\|X\|\leq r$
with $r_{\rm min}(\Gamma)\leq r<r_{\rm max}(\Gamma)$ where
\begin{equation}
\label{rminmax}
r_{\rm min}(\Gamma)=d(\Gamma)/2- \sqrt{d^2(\Gamma)/4
-\cV_0(B,\Gamma)}, \quad
r_{\rm max}(\Gamma)=d(\Gamma)-\sqrt{\cV_0(B,\Gamma)}. \,\,
\end{equation}
The unique solution $X$ belongs to the smallest ball, i.e.
$\|X\|\leq r_{\rm min}(\Gamma)$.
\end{theorem}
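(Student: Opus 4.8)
The plan is to recast \eqref{MainEqC} as a fixed-point problem and apply the Banach contraction principle on closed balls of $\cB(\fA_1)$. Abbreviate $d=d(\Gamma)$ and $V=\cV_0(B,\Gamma)$, and set $F(X)=W_1(A_1+X,\Gamma)$, so that solutions of \eqref{MainEqC} are exactly the fixed points of $F$. The first step is to see that $F$ is well defined on the ball $\{X:\|X\|\le r\}$ for any $r<d$. Since $A_1$ is self-adjoint with spectrum $\sigma_1$, for $\mu\in\Gamma$ one has $\|(A_1-\mu)^{-1}\|=1/\dist(\mu,\sigma_1)\le 1/d$; writing $A_1+X-\mu=(A_1-\mu)\bigl(I+(A_1-\mu)^{-1}X\bigr)$ and expanding the inner factor in a Neumann series (legitimate because $\|(A_1-\mu)^{-1}X\|\le r/d<1$) shows that $\Gamma$ avoids $\spec(A_1+X)$ and yields the uniform bound
$$\|(A_1+X-\mu)^{-1}\|\le \frac{1}{d-r},\qquad \mu\in\Gamma.$$
In particular the integral defining $F(X)$ converges in norm by admissibility of $\Gamma$.

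Next I would establish the self-mapping property. By the a priori estimate \eqref{W1ZG} together with the resolvent bound above, $\|F(X)\|\le V/(d-r)$ whenever $\|X\|\le r$. Hence $F$ carries the ball of radius $r$ into itself as soon as $V/(d-r)\le r$, i.e. $r^2-dr+V\le 0$. Under the hypothesis $V<d^2/4$ this quadratic has two real roots $r_\pm=\tfrac{d}{2}\pm\sqrt{\tfrac{d^2}{4}-V}$, with $r_-=r_{\rm min}(\Gamma)$, so the self-mapping condition holds precisely for $r\in[r_{\rm min}(\Gamma),r_+]$.

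For the contraction estimate I would invoke the second resolvent identity,
$$(A_1+X_1-\mu)^{-1}-(A_1+X_2-\mu)^{-1}=-(A_1+X_1-\mu)^{-1}(X_1-X_2)(A_1+X_2-\mu)^{-1},$$
and combine it with the uniform resolvent bound to obtain $\|F(X_1)-F(X_2)\|\le \frac{V}{(d-r)^2}\,\|X_1-X_2\|$ for $\|X_1\|,\|X_2\|\le r$. Thus $F$ is a strict contraction exactly when $V<(d-r)^2$, that is, when $r<d-\sqrt{V}=r_{\rm max}(\Gamma)$.

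The remaining, and most delicate, step is the algebraic bookkeeping of the three radii: I must check that the contraction range lies inside the self-mapping range, so that both hold simultaneously on $[r_{\rm min}(\Gamma),r_{\rm max}(\Gamma))$. Using $V<d^2/4$ one verifies the elementary inequalities $r_{\rm min}(\Gamma)<r_{\rm max}(\Gamma)\le r_+$ (the second reduces, after squaring $\tfrac{d}{2}-\sqrt{V}\le\sqrt{\tfrac{d^2}{4}-V}$, to $2\sqrt{V}\le d$). Hence for every $r\in[r_{\rm min}(\Gamma),r_{\rm max}(\Gamma))$ the map $F$ is a self-contraction of the corresponding closed ball, and Banach's theorem supplies a unique fixed point there. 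Finally, since these balls are nested and the fixed point already lies in the smallest one (radius $r_{\rm min}(\Gamma)$), uniqueness forces all of them to coincide; this establishes both the uniqueness in each admissible ball and the sharper bound $\|X\|\le r_{\rm min}(\Gamma)$. I expect no genuine obstacle beyond keeping the three radii straight and confirming that the same uniform resolvent estimate drives both the self-mapping and the contraction bounds.
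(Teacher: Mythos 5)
Your proposal is correct and follows exactly the route the paper indicates: the paper itself only remarks that the theorem ``is easily proven by making use of Banach's Fixed Point Theorem (cf.\ [MennMot])'', and your argument is precisely that contraction-mapping proof carried out in full, with the self-mapping condition $r^2-d r+\cV_0\le 0$ and the contraction condition $\cV_0<(d-r)^2$ correctly identifying $r_{\rm min}(\Gamma)$ and $r_{\rm max}(\Gamma)$. The resolvent bound via the Neumann series, the second resolvent identity for the Lipschitz estimate, and the nesting argument for uniqueness and the bound $\|X\|\le r_{\rm min}(\Gamma)$ are all sound.
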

%%%%%%%%%%%%%%%%%%%%%%%%%%%%%%%%%%%%%%%%%%%%%%%%%%%%%%%%%%%%%%%%%%%
The above assertion is easily proven by making use of Banach's
Fixed Point Theorem (cf.~\cite{MennMot}).  Furthermore, it is
then shown that if the value of $l=\pm1$ is fixed, the solution $X$
does not depend on
a specific contour $\Gamma\subset D^l$ satisfying \eqref{Best}.
Moreover, the bound on the norm of $X$ may be optimized with respect to the admissible
contours $\Gamma^l$ in the form
$\|X\|\leq r_0(B)$ with
\begin{equation}
\label{r0B}
  r_0(B):=\Inf\limits_{\Gamma^l:\,\omega(B,\Gamma^l)>0}
  r_{\rm min}(\Gamma^l)\,,
\end{equation}
where $\omega(B,\Gamma^l)=d^2(\Gamma^l)-4\cV_0(B,\Gamma^l).$
Unlike $r_0(B)$, the solution $X$ depends on $l$, and thus we will
supply its notation with the index $l$ writing $X^{(l)}$. As it is
seen from the next statement, the operators $Z^{(l)}=A_1+X^{(l)}$,
$l=\pm1$, may be understood as operator roots of the continued Schur
complement $M_1$.

Two assertions below (Theorem \ref{factorization} and Theorem
\ref{MHOmega}) may be proven in exactly the same way as the
corresponding statements in \cite{MennMot} (see \cite[Theorems 4.1
and 4.9]{MennMot}; also cf. \cite{MM-OTAA}), only the plus and minus signs interchange in
certain places. Thus, we present these assertions without a proof.

%%%%%%%%%%%%%%%%%%%%%%%%%%%%%%%%%%%%%%%%%%%%%%%%%%%%%%%%%%%%%%%%%
\begin{theorem}\label{factorization}
Let $\Gamma^l$ be an admissible contour satisfying {\rm(\ref{Best})}
and let $Z^{(l)}=A_1+X^{(l)}$ where $X^{(l)}$ is the corresponding
unique solution of \eqref{MainEqC} mentioned in Theorem \ref{Solvability}.
Then, for $z\in\C\setminus\Gamma^l$, the Schur complement
$M_1(z,\Gamma^l)$ admits the following factorization:
\begin{equation}
\label{Mfactor}
    M_1(z,\Gamma^l)=F_1(z,\Gamma^l)\,(Z^{(l)}-z)\,,
\end{equation}
where
\begin{equation}
\label{F1zG}
F_1(z,\Gamma^l)=I+\int_{\Gamma^l}
d\mu\,K'_B(\mu)(Z^{(l)}-\mu)^{-1}(\mu-z)^{-1}\,%
\end{equation}
is a bounded operator on $\fA_1$. Moreover, if
$\dist\bigl(z,\sigma_1\bigr)\leq\frac{1}{2}{d(\Gamma^l)}$ then for
sure $F_1(z,\Gamma^l)$ has a bounded inverse.
\end{theorem}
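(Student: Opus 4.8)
The plan is to obtain the factorization \eqref{Mfactor} by a direct algebraic manipulation of \eqref{Mcmpl} built on the defining equation $Z^{(l)}=A_1+W_1(Z^{(l)},\Gamma^l)$ for the operator root, and then to control $F_1(z,\Gamma^l)$ and its invertibility through uniform resolvent estimates on $\Gamma^l$. First I would verify that $F_1(z,\Gamma^l)$ is well defined and bounded. Since $Z^{(l)}=A_1+X^{(l)}$ with $A_1$ self-adjoint and $\|X^{(l)}\|\leq r_{\rm min}(\Gamma^l)<\frac{1}{2}d(\Gamma^l)$ by Theorem \ref{Solvability}, a Neumann-series perturbation argument shows that every $\mu$ with $\dist(\mu,\sigma_1)>\|X^{(l)}\|$ belongs to the resolvent set of $Z^{(l)}$ and satisfies $\|(Z^{(l)}-\mu)^{-1}\|\leq\bigl(\dist(\mu,\sigma_1)-\|X^{(l)}\|\bigr)^{-1}$. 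On $\Gamma^l$ one has $\dist(\mu,\sigma_1)\geq d(\Gamma^l)$, so the resolvent is uniformly bounded there by $\bigl(d(\Gamma^l)-\|X^{(l)}\|\bigr)^{-1}<2/d(\Gamma^l)$; together with the boundedness of $|\mu-z|^{-1}$ for $z\notin\Gamma^l$ and with $\int_{\Gamma^l}|d\mu|\,\|K'_B(\mu)\|=\cV_0(B,\Gamma^l)<\infty$, the integrand in \eqref{F1zG} is norm-integrable and $F_1(z,\Gamma^l)\in\cB(\fA_1)$.

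The factorization itself is the conceptual core. I would write $A_1-z=(Z^{(l)}-z)-X^{(l)}$ in \eqref{Mcmpl} and replace $X^{(l)}$ by its defining expression $X^{(l)}=W_1(Z^{(l)},\Gamma^l)=-\int_{\Gamma^l}d\mu\,K'_B(\mu)(Z^{(l)}-\mu)^{-1}$, which gives
\[
M_1(z,\Gamma^l)=(Z^{(l)}-z)+\int_{\Gamma^l}d\mu\,K'_B(\mu)\bigl[(Z^{(l)}-\mu)^{-1}+(\mu-z)^{-1}\bigr].
\]
The key is the resolvent-type identity $(Z^{(l)}-\mu)^{-1}+(\mu-z)^{-1}=(Z^{(l)}-\mu)^{-1}(\mu-z)^{-1}(Z^{(l)}-z)$, which follows from $(Z^{(l)}-z)=(Z^{(l)}-\mu)+(\mu-z)$ and holds because the scalar factor $(\mu-z)^{-1}$ commutes with everything. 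Substituting it and pulling the $\mu$-independent operator $(Z^{(l)}-z)$ out of the integral on the right yields precisely $M_1(z,\Gamma^l)=F_1(z,\Gamma^l)(Z^{(l)}-z)$ with $F_1$ as in \eqref{F1zG}.

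Finally I would establish invertibility of $F_1(z,\Gamma^l)$ under the hypothesis $\dist(z,\sigma_1)\leq\frac{1}{2}d(\Gamma^l)$ by a Neumann-series argument, i.e.\ by showing $\|F_1(z,\Gamma^l)-I\|<1$. Writing $d=d(\Gamma^l)$ and bounding the integral in \eqref{F1zG} gives $\|F_1-I\|\leq\cV_0(B,\Gamma^l)\,\bigl(d-\|X^{(l)}\|\bigr)^{-1}\sup_{\mu\in\Gamma^l}|\mu-z|^{-1}$. For the last factor, if $s\in\sigma_1$ is a point nearest to $z$, then for $\mu\in\Gamma^l$ one has $|\mu-z|\geq|\mu-s|-|z-s|\geq d-\frac{1}{2}d=\frac{1}{2}d$, so $|\mu-z|^{-1}\leq 2/d$; combined with $\bigl(d-\|X^{(l)}\|\bigr)^{-1}<2/d$ this yields $\|F_1-I\|\leq 4\cV_0(B,\Gamma^l)/d^2<1$ by the admissibility bound \eqref{Best}. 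Hence $F_1(z,\Gamma^l)$ is boundedly invertible.

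I expect the main obstacle to be the bookkeeping of constants in this last estimate — arranging the two factors of $2/d(\Gamma^l)$ to combine exactly against the threshold $\frac{1}{4}d^2(\Gamma^l)$ of \eqref{Best}, and tracing where the hypothesis $\dist(z,\sigma_1)\leq\frac{1}{2}d(\Gamma^l)$ enters the bound on $|\mu-z|^{-1}$. By contrast, the factorization identity is a short and clean computation once the resolvent identity is in hand, and the boundedness of $F_1$ is immediate from the uniform resolvent estimate of the first step.
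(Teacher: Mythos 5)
Your argument is correct and is essentially the standard proof: the paper itself omits the proof and refers to \cite[Theorem 4.1]{MennMot}, where the factorization is obtained in exactly this way --- substituting $A_1-z=(Z^{(l)}-z)-X^{(l)}$ with $X^{(l)}=W_1(Z^{(l)},\Gamma^l)$ into \eqref{Mcmpl}, applying the resolvent identity $(Z^{(l)}-\mu)^{-1}+(\mu-z)^{-1}=(Z^{(l)}-\mu)^{-1}(\mu-z)^{-1}(Z^{(l)}-z)$, and then invoking the Neumann-series bounds $\|(Z^{(l)}-\mu)^{-1}\|<2/d(\Gamma^l)$ on $\Gamma^l$ and $|\mu-z|^{-1}\leq 2/d(\Gamma^l)$ to get $\|F_1-I\|\leq 4\cV_0(B,\Gamma^l)/d^2(\Gamma^l)<1$ from \eqref{Best}. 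No gaps.
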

%%%%%%%%%%%%%%%%%%%%%%%%%%%%%%%%%%%%%%%%%%%%%%%%%%%%%%%%%%%%%%%%%%%
Following the Markus and Matsaev factorization result for
holomorphic operator-valued functions \cite{MrMt}
%%%(see also \cite{LMMT01,MenShk})
we interpret the factorization property
\eqref{Mfactor} in the sense that $Z^{(l)}$ is the operator root of
the analytically continued Schur complement $M_1(\cdot,\Gamma^l)$.

As an elementary consequence of Theorem \ref{factorization}, we obtain
the following corollary.

\begin{corollary}\label{SpHalfVic}
The spectrum $\sigma(Z^{(l)})$ of the operator $Z^{(l)}=A_1+X^{(l)}$
lies in the closed $r_0(B)$-neighborhood ${\cal O}_{r_0}(\sigma_1)$
of the spectrum of $A_1$ in $\bbC$.  Moreover, the spectrum of
$M_1(\,\cdot\,,\Gamma^l)$ lying in the closed
$d(\Gamma)/2$-neighborhood ${\cal O}_{d(\Gamma)/2}(\sigma_1)$ of
$\sigma_1$ in $\bbC$ is nothing but the spectrum of $Z^{(l)}$.
\end{corollary}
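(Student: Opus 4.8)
The plan is to establish the two assertions separately: the first is pure bounded perturbation theory for the self-adjoint operator $A_1$, while the second is read off from the factorization \eqref{Mfactor} of Theorem \ref{factorization}.

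For the inclusion $\sigma(Z^{(l)})\subset\cO_{r_0}(\sigma_1)$ I would start from the bound $\|X^{(l)}\|\le r_0(B)$ established above and view $Z^{(l)}=A_1+X^{(l)}$ as a bounded perturbation of the self-adjoint $A_1$. Concretely, for any $z$ with $\dist(z,\sigma_1)>\|X^{(l)}\|$ the self-adjoint resolvent bound $\|(A_1-z)^{-1}\|=1/\dist(z,\sigma_1)$ gives $\|(A_1-z)^{-1}X^{(l)}\|<1$, so the Neumann series for $\bigl(I+(A_1-z)^{-1}X^{(l)}\bigr)^{-1}$ converges and $Z^{(l)}-z=(A_1-z)\bigl(I+(A_1-z)^{-1}X^{(l)}\bigr)$ is boundedly invertible. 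Hence every such $z$ is in the resolvent set of $Z^{(l)}$, which yields $\sigma(Z^{(l)})\subset\cO_{\|X^{(l)}\|}(\sigma_1)\subset\cO_{r_0}(\sigma_1)$.

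For the second assertion I would fix $z\in\cO_{d(\Gamma)/2}(\sigma_1)$ and first note that such a $z$ cannot lie on $\Gamma^l$: every point of $\Gamma^l$ has distance at least $d(\Gamma)=\dist(\sigma_1,\Gamma^l)$ from $\sigma_1$, whereas $\dist(z,\sigma_1)\le d(\Gamma)/2<d(\Gamma)$. Thus the factorization $M_1(z,\Gamma^l)=F_1(z,\Gamma^l)(Z^{(l)}-z)$ applies, and by Theorem \ref{factorization} the factor $F_1(z,\Gamma^l)$ is boundedly invertible throughout this neighborhood. Since $F_1(z,\Gamma^l)$ is invertible, $M_1(z,\Gamma^l)$ is boundedly invertible if and only if $Z^{(l)}-z$ is, so $z\in\sigma\bigl(M_1(\cdot,\Gamma^l)\bigr)$ exactly when $z\in\sigma(Z^{(l)})$. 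This identifies the parts of the two spectra lying in $\cO_{d(\Gamma)/2}(\sigma_1)$.

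Finally I would verify that all of $\sigma(Z^{(l)})$ is genuinely captured by $\cO_{d(\Gamma)/2}(\sigma_1)$, so that the localized statement recovers the entire spectrum of $Z^{(l)}$. Since $\cV_0(B,\Gamma)\ge 0$ forces $r_{\rm min}(\Gamma)\le d(\Gamma)/2$, and since $\|X^{(l)}\|\le r_{\rm min}(\Gamma^l)$, the first assertion gives $\sigma(Z^{(l)})\subset\cO_{d(\Gamma)/2}(\sigma_1)$; combined with the equivalence just shown this proves that $\sigma(Z^{(l)})$ coincides with the part of $\sigma\bigl(M_1(\cdot,\Gamma^l)\bigr)$ in $\cO_{d(\Gamma)/2}(\sigma_1)$. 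All the individual steps are routine once Theorem \ref{factorization} is in hand; the only point requiring care — and hence the main obstacle — is this last nesting of neighborhoods, namely checking that the $r_0(B)$-bound on $\|X^{(l)}\|$ does not exceed $d(\Gamma)/2$, which is exactly what guarantees that no eigenvalue of $Z^{(l)}$ escapes the region where $F_1(z,\Gamma^l)$ is invertible.
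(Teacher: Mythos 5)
Your proof is correct and is essentially the argument the paper has in mind: the paper states the corollary without proof as an ``elementary consequence'' of Theorem \ref{factorization}, and the intended route is exactly yours --- the Neumann-series perturbation bound $\sigma(Z^{(l)})\subset\cO_{\|X^{(l)}\|}(\sigma_1)$ combined with $\|X^{(l)}\|\leq r_0(B)$ for the first claim, and the factorization \eqref{Mfactor} together with the invertibility of $F_1(z,\Gamma^l)$ on $\cO_{d(\Gamma)/2}(\sigma_1)$ for the second. Your closing check that $r_{\rm min}(\Gamma)\leq d(\Gamma)/2$, so that no part of $\sigma(Z^{(l)})$ escapes the region where the factorization identifies the two spectra, is the right point to make explicit.
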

%%%%%%%%%%%%%%%%%%%%%%%%%%%%%%%%%%%%%%%%%%%%%%%%%%%%%%%%%%%%%%%%%%%

Let
\begin{equation}
\label{Oml}
\Omega^{(l)}=\displaystyle\int_{\Gamma^l} d\mu\,
(Z^{(-l)*}-\mu)^{-1}K'_B(\mu)\, (Z^{(l)}-\mu)^{-1}
\end{equation}
where $\Gamma^l$ denotes an admissible contour satisfying the
condition~\eqref{Best}.
%%%%%%%%%%%%%%%%%%%%%%%%%%%%%%%%%%%%%%%%%%%%%%%%%%%%%%%%%%%%%%%%%%%
\begin{theorem}
\label{MHOmega}
The operators $\Omega^{(l)}$, $l=\pm1$, have the following
properties {\rm(}cf. {\rm\cite{MenShk}):}
\begin{eqnarray}
\label{Om0VG}
\|\Omega^{(l)}\|&<& \frac{\cV_0(B,\Gamma)}{\frac{1}{4}\,d^2(\Gamma)}<1, \qquad\\
\label{OmOm}
\Omega^{(-l)}&=&\Omega^{(l)*}, \\
\label{OmegaM}
-\frac{1}{2\pi\ri}\int_\gamma dz\,[M_1(z,\Gamma^l)]^{-1} &=&
(I-\Omega^{(l)})^{-1}\,,\\
\label{Hadj}
-\frac{1}{2\pi\ri}\int_\gamma dz\,z\,[M_1(z,\Gamma^l)]^{-1} &=&
(I-\Omega^{(l)})^{-1}Z^{(-l)*}=
Z^{(l)}(I-\Omega^{(l)})^{-1}\,\quad
\end{eqnarray}
where $\gamma$ stands for an arbitrary rectifiable closed
contour going in the positive direction around the
spectrum of $Z^{(l)}$ inside the set ${\cO}_{d(\Gamma)/2}(\sigma_1)$.
\end{theorem}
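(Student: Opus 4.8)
The plan is to treat the four assertions in order, obtaining \eqref{OmegaM} and \eqref{Hadj} from the factorization of Theorem~\ref{factorization} together with an adjoint (``right'') companion of it. For the bound \eqref{Om0VG} I would estimate the integrand of \eqref{Oml} pointwise. Since $Z^{(l)}=A_1+X^{(l)}$ with $A_1$ self-adjoint and $\|X^{(l)}\|\le r_{\rm min}(\Gamma)$ by Theorem~\ref{Solvability}, a Neumann-series argument gives $\|(Z^{(l)}-\mu)^{-1}\|\le\bigl(d(\Gamma)-r_{\rm min}(\Gamma)\bigr)^{-1}$ for $\mu\in\Gamma^l$ because $\dist(\mu,\sigma_1)\ge d(\Gamma)$; the same bound holds for $\|(Z^{(-l)*}-\mu)^{-1}\|$ once one notes $\|X^{(-l)*}\|=\|X^{(-l)}\|$ and that both $d(\Gamma)$ and $\cV_0(B,\Gamma)$ are invariant under reflection of the contour across $\bbR$. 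Integrating $\|K'_B(\mu)\|$ along $\Gamma^l$ then yields $\|\Omega^{(l)}\|\le\cV_0(B,\Gamma)\big/\bigl(d(\Gamma)-r_{\rm min}(\Gamma)\bigr)^{2}$, and since $r_{\rm min}(\Gamma)<d(\Gamma)/2$ strictly (from \eqref{rminmax} under \eqref{Best}) the right-hand side is $<\cV_0(B,\Gamma)\big/\bigl(\tfrac14 d^{2}(\Gamma)\bigr)<1$, which is \eqref{Om0VG}.

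The symmetry \eqref{OmOm} I would get by taking the adjoint of \eqref{Oml} term by term, using $[K'_B(\mu)]^*=K'_B(\bar\mu)$, $[(Z^{(l)}-\mu)^{-1}]^*=(Z^{(l)*}-\bar\mu)^{-1}$, and the substitution $\nu=\bar\mu$, which maps $\Gamma^l$ onto $\Gamma^{-l}$ with the correct orientation since the endpoints are real and fixed; the result is precisely the defining integral of $\Omega^{(-l)}$. The same symmetry supplies the tool for \eqref{OmegaM}: from $M_1(z,\Gamma^l)^*=M_1(\bar z,\Gamma^{-l})$ and Theorem~\ref{factorization} applied on $\Gamma^{-l}$ one obtains a second, ``right'' factorization $M_1(z,\Gamma^l)=(Z^{(-l)*}-z)\,\widetilde F_1(z)$, where $\widetilde F_1(z)=F_1(\bar z,\Gamma^{-l})^*$ is invertible on $\cO_{d(\Gamma)/2}(\sigma_1)$.

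For \eqref{OmegaM} set $P:=-\tfrac{1}{2\pi\ri}\int_\gamma M_1(z,\Gamma^l)^{-1}\,dz$. Comparing the two factorizations shows that the only singularities of $M_1(\cdot,\Gamma^l)^{-1}$ inside $\cO_{d(\Gamma)/2}(\sigma_1)$ are at $\sigma(Z^{(l)})$ and at $\sigma(Z^{(-l)*})$ simultaneously, so these sets coincide and the Riesz projection of $Z^{(-l)*}$ along $\gamma$ equals $I$. Subtracting it from $P$ and using the right factorization gives $P-I=\tfrac{1}{2\pi\ri}\int_\gamma\bigl(\widetilde F_1(z)-I\bigr)M_1(z,\Gamma^l)^{-1}\,dz$. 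I would then insert the explicit kernel of $\widetilde F_1(z)-I$, exchange the $\mu$- and $z$-integrations by Fubini, and evaluate the inner integral $\tfrac{1}{2\pi\ri}\int_\gamma(\mu-z)^{-1}M_1(z,\Gamma^l)^{-1}\,dz$ through the partial-fraction identity $(\mu-z)^{-1}(Z^{(l)}-z)^{-1}=(\mu-Z^{(l)})^{-1}\bigl[(Z^{(l)}-z)^{-1}-(\mu-z)^{-1}\bigr]$; since $\mu\in\Gamma^l$ lies outside $\gamma$ the $(\mu-z)^{-1}$ term integrates to zero while $F_1^{-1}$ is holomorphic inside $\gamma$, and the inner integral collapses to $(Z^{(l)}-\mu)^{-1}P$. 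The surviving left factor $(Z^{(-l)*}-\mu)^{-1}K'_B(\mu)$ then recombines into \eqref{Oml}, producing the fixed-point relation $P=I+\Omega^{(l)}P$; as $\|\Omega^{(l)}\|<1$ this gives $P=(I-\Omega^{(l)})^{-1}$.

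Finally, \eqref{Hadj} would follow from \eqref{OmegaM} by writing $z\,(Z^{(l)}-z)^{-1}=-I+Z^{(l)}(Z^{(l)}-z)^{-1}$ in the left factorization and, dually, $z\,(Z^{(-l)*}-z)^{-1}=-I+Z^{(-l)*}(Z^{(-l)*}-z)^{-1}$ in the right one: in each case the constant term integrates to zero by holomorphy of $F_1^{-1}$, respectively $\widetilde F_1^{-1}$, inside $\gamma$, leaving $Z^{(l)}P=Z^{(l)}(I-\Omega^{(l)})^{-1}$ and $P\,Z^{(-l)*}=(I-\Omega^{(l)})^{-1}Z^{(-l)*}$, the two stated forms (whose equality records the similarity of $Z^{(l)}$ and $Z^{(-l)*}$). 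I expect the main obstacle to be the bookkeeping in \eqref{OmegaM}: one must pick the correct factorization on each side so that the residual resolvents reproduce exactly the left and right factors of $\Omega^{(l)}$ in \eqref{Oml}, keep the unknown operator $P$ on the proper side throughout, and justify the Fubini exchange together with the placement of the contours $\gamma$ and $\Gamma^l$ (the latter lying outside the former).
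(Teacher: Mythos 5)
Your argument is correct and, as far as one can compare, follows the same route as the paper, which itself presents Theorem \ref{MHOmega} without proof and defers to \cite{MennMot}: the two-sided factorization $M_1(z,\Gamma^l)=F_1(z,\Gamma^l)(Z^{(l)}-z)=(Z^{(-l)*}-z)\widetilde F_1(z)$ extracted from the symmetry $M_1(z,\Gamma^l)^*=M_1(\bar z,\Gamma^{-l})$, the resulting coincidence of $\spec(Z^{(l)})$ and $\spec(Z^{(-l)*})$ inside ${\cO}_{d(\Gamma)/2}(\sigma_1)$, and the contour-integral computation producing the fixed-point relation $P=I+\Omega^{(l)}P$ are exactly the mechanism of the cited proof, and your resolvent bounds via $r_{\rm min}(\Gamma)<d(\Gamma)/2$ give \eqref{Om0VG} as stated. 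The only cosmetic caveat is that the first inequality in \eqref{Om0VG} becomes an equality in the trivial case $\cV_0(B,\Gamma)=0$, a degeneracy already present in the statement itself.
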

%%%%%%%%%%%%%%%%%%%%%%%%%%%%%%%%%%%%%%%%%%%%%%%%%%%%%%%%%%%%%%%%%%%

The expressions \eqref{OmegaM} and~\eqref{Hadj} allow us, in
principle, to find the operators $Z^{(l)}$ and, thus, to solve the
equation~\eqref{MainEqC} only by using the contour integration of
the inverse of the continued Schur complement
$[M_1(z,\Gamma^l)]^{-1}$.  From \ref{Hadj}) it also follows that the
operators $Z^{(-1)*}$ and $Z^{(+1)}$ are similar to each other and,
thus, the spectrum of $Z^{(-1)*}$ coincides with that of $Z^{(+1)}$.

%%%%%%%%%%%%%%%%%%%%%%%%%%%%%%%%%%%%%%%%%%%%%%%%%%%%%%%%%%%%%%%%%%%
\section{Solvability of the operator Riccati equations in case \\
of absence of the real spectrum and resonances}
%%%%%%%%%%%%%%%%%%%%%%%%%%%%%%%%%%%%%%%%%%%%%%%%%%%%%%%%%%%%%%%%%%%
\label{SNoRes}

To avoid some  purely technical complications, in the rest of the
paper we assume that the operator $A_1$ is bounded.

Recall that we work under the assumption that the spectrum of $A_0$
is absolutely continuous and that it coincides with the closure
$\overline{\Delta}_0$ of the interval $\Delta_0\subset\bbR$. The
interval $\Delta_0$ is a part of the boundary of the continuation
domain $D^l\subset\bbC^l$, $l=\pm1$, for the Schur complement $M_1$.
Suppose it so happened that the spectrum of the operator $Z^{(l)}$
is separated from the spectrum of $A_0$ and that, in addition, the
(complex) spectrum of  $Z^{(l)}$ in the domain $D^l$ is empty. In
other words, let us make the following assumption.

\begin{hypothesis}
\label{HypR} Let the (self-adjoint) operator $A_1$ be bounded. Assume
the hypothesis of Theorem \ref{Solvability} for an admissible
contour $\Gamma^{l}\subset D^l$, $l=\pm1$ and let
$Z^{(l)}=A_1+X^{(l)}$ where $X^{(l)}$ denotes the corresponding
unique solution to \eqref{Best}. Suppose that
\begin{equation}
\label{sZc}
\dist\bigl(\spec(Z^{(l)}),D^l\bigr)>0.
\end{equation}
\end{hypothesis}
\begin{remark}
One notices that under Hypothesis \ref{HypR} and, in particular,
under the assumption \eqref{sZc} the resolvent $(Z^{(l)}-\mu)^{-1}$
is uniformly bounded on $\overline{\Delta}_0$, i.e.
$\sup\limits_{\mu\in\overline{\Delta}_0}\|(Z^{(l)}-\mu)^{-1}\|<\infty$,
and hence
\begin{equation}
\label{Xb}
\int_{\Delta_0} d\mu\|K'_B(\mu)\|\, \|(Z^{(l)}-\mu)^{-1}\|^2<\infty.
\end{equation}
\end{remark}

Under Hypothesis \ref{HypR} one may talk on the existence
of solutions to the operator Riccati equations \eqref{RicX} and
\eqref{RicY}.

\begin{lemma}
\label{LRic}
Assume Hypothesis \ref{HypR} for some $l=\pm1$ and set
\begin{equation}
\label{Ysol}
Y^{(l)}=\int_{\Delta_0} \sE_0(d\mu) B(Z^{(l)}-\mu)^{-1}.
\end{equation}
The operator $Y^{(l)}$ is a bounded operator from $\fA_1$ to $\fA_0$,
\begin{equation}
\label{YsN}
\|Y^{(l)}\|\leq \left(\int_{\Delta_0} d\mu\|K'_B(\mu)\|\, \|(Z^{(l)}-\mu)^{-1}\|^2\right)^{1/2}.
\end{equation}
Moreover, $Y^{(l)}$ is a strong solution to the operator Riccati equation
\eqref{RicY}.
\end{lemma}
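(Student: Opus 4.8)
The plan is to split the statement into three parts: boundedness together with the norm bound \eqref{YsN}, the identification $Z^{(l)}=A_1-B^*Y^{(l)}$, and the verification of the Riccati equation \eqref{RicY}. For the first part I would fix $x_1\in\fA_1$, set $g(\mu)=B(Z^{(l)}-\mu)^{-1}x_1$, and exploit the mutual orthogonality of the increments of the spectral measure $\sE_0$: the Riemann--Stieltjes sums approximating $Y^{(l)}x_1=\int_{\Delta_0}\sE_0(d\mu)\,g(\mu)$ consist of pairwise orthogonal terms $\sE_0(\Delta_j)g(\mu_j)$, so that
\begin{equation*}
\|Y^{(l)}x_1\|^2=\int_{\Delta_0}\langle\sE_0(d\mu)\,g(\mu),g(\mu)\rangle=\int_{\Delta_0}d\mu\,\langle K'_B(\mu)(Z^{(l)}-\mu)^{-1}x_1,(Z^{(l)}-\mu)^{-1}x_1\rangle,
\end{equation*}
where I have used \eqref{KB} in the form $B^*\sE_0(d\mu)B=dK_B(\mu)=K'_B(\mu)\,d\mu$ and the positivity $K'_B(\mu)\geq 0$ (which holds since $\mu\mapsto K_B(\mu)$ is nondecreasing). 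Bounding the integrand by $\|K'_B(\mu)\|\,\|(Z^{(l)}-\mu)^{-1}\|^2\|x_1\|^2$ and invoking the finiteness \eqref{Xb} from the remark following Hypothesis \ref{HypR} yields at once the boundedness of $Y^{(l)}$ and the estimate \eqref{YsN}.

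The second part is the algebraic heart of the matter. Pulling the bounded operator $B^*$ inside the integral \eqref{Ysol} and using \eqref{KB}, I would rewrite
\begin{equation*}
B^*Y^{(l)}=\int_{\Delta_0}dK_B(\mu)\,(Z^{(l)}-\mu)^{-1}=\int_{\Delta_0}d\mu\,K'_B(\mu)(Z^{(l)}-\mu)^{-1}.
\end{equation*}
Comparing this with the basic equation \eqref{MainZ}--\eqref{W1Z}, in which $X^{(l)}=W_1(Z^{(l)},\Gamma^l)=-\int_{\Gamma^l}d\mu\,K'_B(\mu)(Z^{(l)}-\mu)^{-1}$, the two expressions differ only by the integration path. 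Under Hypothesis \ref{HypR} the spectrum of $Z^{(l)}$ stays at positive distance from $D^l$, hence from the closed region bounded by $\Delta_0$ and $\Gamma^l$, while $K'_B$ is holomorphic there; since both paths share the fixed endpoints and the finite variation of $K_B$ controls the contributions near infinity, Cauchy's theorem lets me deform $\Gamma^l$ onto $\Delta_0$. This gives $X^{(l)}=-B^*Y^{(l)}$ and therefore the key identity $Z^{(l)}=A_1+X^{(l)}=A_1-B^*Y^{(l)}$.

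With this identity the Riccati equation \eqref{RicY} simplifies: substituting $B^*Y^{(l)}=A_1-Z^{(l)}$ collapses $-Y^{(l)}A_1+Y^{(l)}B^*Y^{(l)}$ into $-Y^{(l)}Z^{(l)}$, so it suffices to prove the Sylvester-type identity $A_0Y^{(l)}x_1-Y^{(l)}Z^{(l)}x_1=-Bx_1$ for every $x_1\in\fA_1$. To verify it I would apply the resolvent algebra $(Z^{(l)}-\mu)^{-1}Z^{(l)}=I+\mu(Z^{(l)}-\mu)^{-1}$ pointwise in $\mu$ inside \eqref{Ysol}, obtaining
\begin{equation*}
Y^{(l)}Z^{(l)}x_1=\int_{\Delta_0}\sE_0(d\mu)\,Bx_1+\int_{\Delta_0}\mu\,\sE_0(d\mu)\,B(Z^{(l)}-\mu)^{-1}x_1=Bx_1+\int_{\Delta_0}\mu\,\sE_0(d\mu)\,B(Z^{(l)}-\mu)^{-1}x_1,
\end{equation*}
where $\sE_0(\Delta_0)=I$ because $\sigma_0$ is purely absolutely continuous. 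Since the remaining integral is precisely $A_0Y^{(l)}x_1$, the desired identity $A_0Y^{(l)}x_1-Y^{(l)}Z^{(l)}x_1=-Bx_1$ follows.

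The step that needs genuine care --- and the one I \emph{expect to be the main obstacle} --- is the domain claim $\Ran(Y^{(l)})\subset\Dom(A_0)$, which legitimizes identifying that last integral with $A_0Y^{(l)}x_1$ when $A_0$ is unbounded, i.e.\ when $\Delta_0$ is infinite. Using orthogonality of the spectral increments once more, membership $Y^{(l)}x_1\in\Dom(A_0)$ is equivalent to finiteness of $\int_{\Delta_0}\mu^2\,d\mu\,\langle K'_B(\mu)(Z^{(l)}-\mu)^{-1}x_1,(Z^{(l)}-\mu)^{-1}x_1\rangle$. Here I would write $\mu(Z^{(l)}-\mu)^{-1}=Z^{(l)}(Z^{(l)}-\mu)^{-1}-I$, which is uniformly bounded on $\Delta_0$ thanks to \eqref{sZc}, so that the integrand is controlled by $\|K'_B(\mu)\|$; the required integrability of $\|K'_B\|$ over $\Delta_0$ then follows from the endpoint bound \eqref{KBbnd} (with $\gamma>-1$) at finite endpoints and from the finiteness of the variation of $K_B$ along contours approaching $\Delta_0$ near infinity. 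Note that \eqref{Xb} alone does not suffice for this, since $\mu^2\|(Z^{(l)}-\mu)^{-1}x_1\|^2$ tends to $\|x_1\|^2$ rather than to $0$ at infinity; securing the integrability of $\|K'_B\|$ itself is the delicate point. Once it is in place, the spectral-theorem identification $A_0Y^{(l)}x_1=\int_{\Delta_0}\mu\,\sE_0(d\mu)\,B(Z^{(l)}-\mu)^{-1}x_1$ is automatic and the proof closes.
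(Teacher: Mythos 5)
Your proposal is correct and, for its first two steps, coincides with the paper's argument: the norm bound \eqref{YsN} comes from exactly the same orthogonality-of-spectral-increments computation as the paper's display \eqref{Xb1}, and the identity $Z^{(l)}=A_1-B^*Y^{(l)}$ is obtained, as in the paper, by deforming the admissible contour $\Gamma^{l}$ onto $\Delta_0$, which is legitimate because \eqref{sZc} keeps $\spec(Z^{(l)})$ away from the closure of $D(\Gamma^{l})$. The divergence is in the final step. The paper does not verify the Riccati equation by hand: it notes that $Y^{(l)}$ satisfies the fixed-point equation \eqref{Yeq} and then invokes \cite[Theorem 5.5]{AM11} (cf.\ \cite[Theorem 3.4]{AMM}), which states precisely that a bounded solution of that integral equation is a strong solution of \eqref{RicY}. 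You instead reduce \eqref{RicY} to the Sylvester identity $A_0Y^{(l)}-Y^{(l)}Z^{(l)}=-B$ via $B^*Y^{(l)}=A_1-Z^{(l)}$ and verify it directly with the resolvent identity $(Z^{(l)}-\mu)^{-1}Z^{(l)}=I+\mu(Z^{(l)}-\mu)^{-1}$ and $\sE_0(\Delta_0)=I$. This is in effect an unpacking of the cited theorem; it makes the proof self-contained and correctly isolates where the genuine work lies, namely $\Ran(Y^{(l)})\subset\Dom(A_0)$, which is exactly what the citation buys the authors for free. The one soft spot is your justification of $\int_{\Delta_0}\|K'_B(\mu)\|\,d\mu<\infty$ when $\Delta_0$ is unbounded (the appeal to finiteness of $\cV_0(B,\Gamma^{l})$ ``near infinity'' is heuristic rather than a proof), but the paper's own Remark asserting \eqref{Xb} tacitly relies on the same integrability, so this does not fall below the paper's standard of rigor.
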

\begin{proof}
By the hypothesis, there is no spectrum of $Z^{(l)}$ in $D^l$ and,
hence, on the closure of the subdomain $D(\Gamma^{l})$ bounded by
the interval $\Delta_0$ and the curve $\Gamma^{l}$. In particular,
$\dist\bigl(\Delta_0,\spec(Z^{(l)})\bigr)>0$ and one can transform
the integration contour $\Gamma^{l}$ into $\Delta_0$ and
equivalently replace equation \eqref{MainZ} by the equation
\begin{equation}
\label{MainZR}
Z^{(l)}=A_1-B^*\int_{\Delta_0} \sE_0(d\mu)B(Z^{(l)}-\mu)^{-1}.
\end{equation}
The integrals on the right-hand sides of \eqref{Ysol} and
\eqref{MainZR} are understood in the strong sense.
Clearly, due to \eqref{KB} and \eqref{Xb} we have
\begin{align}
\nonumber
\left\| \int_{\Delta_0}\sE_0(d\mu)B(Z^{(l)}-\mu)^{-1}x \right\|^2&=
\nonumber
\biggl\langle \int_{\Delta_0}  B^*\sE_0(d\mu)B(Z^{(l)}-\mu)^{-1}x,(Z^{(l)}-\mu)^{-1}x \biggr\rangle\\
\nonumber
&=\biggl\langle \int_{\Delta_0} d\mu K_B'(\mu) (Z^{(l)}-\mu)^{-1}x,(Z^{(l)}-\mu)^{-1}x \biggr\rangle\\
\label{Xb1}
& \leq \int_{\Delta_0} d\mu\|K'_B(\mu)\|\, \|(Z^{(l)}-\mu)^{-1}\|^2 \|x\|^2,
\end{align}
which implies \eqref{YsN}. From \eqref{MainZR} it also follows that
\begin{equation}
\label{ZAY}
Z^{(l)}=A_1-B^*Y^{(l)},
\end{equation}
and by \eqref{Ysol} the operator $Y^{(l)}$ itself satisfies the equation
\begin{equation}
\label{Yeq} Y^{(l)}=\int_{\Delta_0} \sE_0(d\mu)
B(A_1-B^*Y^{(l)}-\mu)^{-1}.
\end{equation}
Then \cite[Theorem 5.5]{AM11} (also cf. \cite[Theorem 3.4]{AMM})
applies and one concludes that $Y^{(l)}$ is a strong solution to
\eqref{RicY}, completing the proof.
\end{proof}
\begin{corollary}
\label{Cor1} The operator $\wY^{(l)}={Y^{(l)}}^*$ is a strong
solution to the operator Riccati equation \eqref{RicX}. The graphs
\begin{equation}
\label{GLs}
\fL_0^{(l)}=\cG(\wY^{(l)}) \text{\, and \,} \fL_1^{(l)}=\cG(Y^{(l)})
\end{equation}
are mutually $J$-orthogonal invariant subspaces
of the block operator matrix $L$. That is,
\begin{equation}
\label{Jxy0}
\lal Jx,y\ral=0 \quad\text{for any \,}x\in\cG(\wY^{(l)}), \,\, y\in\cG(Y^{(l)})
\end{equation}
and $Lx\in\cG(\wY^{(l)})$ whenever  $x\in\Dom(L)\cap\cG(\wY^{(l)})$
and $Ly\in\cG(Y^{(l)})$ whenever $y\in\Dom(L)\cap\cG(Y^{(l)})$.
\end{corollary}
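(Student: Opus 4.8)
The plan is to obtain all three assertions from the single input supplied by Lemma \ref{LRic}, namely that $Y:=Y^{(l)}$ is a \emph{strong} solution of \eqref{RicY}, together with the standing assumptions that $A_1$ and $B$ are bounded while $A_0$ is self-adjoint; recall that being a strong solution entails, in particular, $\Ran(Y)\subset\Dom(A_0)$ and the identity $A_0Yx_1-YA_1x_1+YB^*Yx_1=-Bx_1$ for every $x_1\in\fA_1$. The first step is to show that $\wY:=Y^*$ is a strong solution of the dual equation \eqref{RicX}. I would pair the strong-solution identity for $Y$ with an arbitrary $x_0\in\Dom(A_0)$ and transpose each operator onto $x_0$: because $Yx_1\in\Dom(A_0)$ and $A_0=A_0^*$, the term $\lal A_0Yx_1,x_0\ral$ equals $\lal Yx_1,A_0x_0\ral=\lal x_1,\wY A_0x_0\ral$, while the three remaining terms transpose directly via $A_1=A_1^*$ and $\wY=Y^*$. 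Collecting the results gives $\lal x_1,\,\wY A_0x_0-A_1\wY x_0+\wY B\wY x_0+B^*x_0\ral=0$ for all $x_1\in\fA_1$, hence $\wY A_0x_0-A_1\wY x_0+\wY B\wY x_0=-B^*x_0$ on $\Dom(A_0)$, which is exactly \eqref{RicX} (the requirement $\Ran(\wY)\subset\Dom(A_1)=\fA_1$ being automatic since $A_1$ is bounded). I expect this to be the only delicate point of the proof: moving the \emph{unbounded} operator $A_0$ across the inner product is legitimate only because the strong-solution property guarantees $Yx_1\in\Dom(A_0)$, so that $A_0Yx_1$ is a genuine and not a merely formal expression.

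With $\wY=Y^*$ solving \eqref{RicX}, the $J$-orthogonality \eqref{Jxy0} is immediate. For $x=x_0\oplus\wY x_0\in\cG(\wY)$ and $y=Yx_1\oplus x_1\in\cG(Y)$, formula \eqref{J} gives $\lal Jx,y\ral=\lal x_0,Yx_1\ral-\lal\wY x_0,x_1\ral$, and the identity $\wY=Y^*$ makes the two inner products coincide, so their difference vanishes.

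For the invariance I would evaluate $L$ on a generic graph vector and rewrite the components using the appropriate Riccati equation. Since $\Ran(Y)\subset\Dom(A_0)$, every $y=Yx_1\oplus x_1$ already lies in $\Dom(L)=\Dom(A_0)\oplus\fA_1$; the second component of $Ly$ equals $(A_1-B^*Y)x_1=Z^{(l)}x_1=:w$ by \eqref{ZAY}, while \eqref{RicY} recasts the first component $A_0Yx_1+Bx_1$ as $Y(A_1-B^*Y)x_1=Yw$, so $Ly=Yw\oplus w\in\cG(Y)$. Symmetrically, for $x=x_0\oplus\wY x_0$ with $x_0\in\Dom(A_0)$ the first component of $Lx$ is $v:=A_0x_0+B\wY x_0\in\fA_0$, and \eqref{RicX} turns the second component $A_1\wY x_0-B^*x_0$ into $\wY(A_0x_0+B\wY x_0)=\wY v$, so $Lx=v\oplus\wY v\in\cG(\wY)$. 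Hence both graphs are $L$-invariant, which completes the proof.
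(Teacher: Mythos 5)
Your proof is correct and is exactly the argument the paper leaves implicit (Corollary~\ref{Cor1} is stated without proof as an elementary consequence of Lemma~\ref{LRic}): pass to the adjoint of the strong Riccati identity for $Y^{(l)}$ to get \eqref{RicX}, compute $\lal Jx,y\ral$ directly from $\wY^{(l)}={Y^{(l)}}^*$, and verify invariance of both graphs by substituting the two Riccati equations into the components of $Lx$ and $Ly$. You also correctly isolate the one delicate point, namely that $\Ran(Y^{(l)})\subset\Dom(A_0)$ (part of the strong-solution property) is what legitimizes moving the unbounded $A_0$ across the inner product and guarantees $\cG(Y^{(l)})\subset\Dom(L)$.
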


\begin{remark}
\label{RemYg1}
The following inequality holds:
\begin{equation}
\label{NYg1}
\|Y^{(l)}\|\geq 1\quad (\text{and, hence, \,}\|\wY^{(l)}\|=\|{Y^{(l)}}\|\geq 1),\,\,\, l=\pm1.
\end{equation}
\end{remark}
\begin{proof}
Suppose the opposite, that is, $Y^{(l)}<1$. Then by \cite[Theorem 5.3]{AMSh08}
(see also \cite[Theorem 4.1]{AdL} and \cite[Theorem 3.2]{MenShk})
the operator matrix $L$ is similar to a self-adjoint operator and, hence,
the spectrum of $L$ is purely real. Moreover, in such a case the spectrum $L$ is given by
\begin{equation}
\label{sLZZ}
\spec(L)=\spec(Z^{(l)})\cup\spec(\wZ^{(l)}),
\end{equation}
where
\begin{equation}
\label{wZ}
\wZ^{(l)}:=A_0+B\wY^{(l)}=A_0+B{Y^{(l)}}^*, \quad \Dom(\wZ^{(l)})=\Dom(A_0).
\end{equation}
Thus, in particular, $\spec(Z^{(l)})\subset\bbR$.
But this is not the case since Corollary \ref{SpHalfVic} and Hypothesis \ref{HypR}
imply the inclusion $\spec(Z^{(l)})\subset D^{(-l)}$, i.e.
the set $\spec(Z^{(l)})$ is purely complex. This contradiction completes the proof.
\end{proof}

Thus, under Hypothesis \ref{HypR} the angular operators $Y^{(l)}$,
$l=\pm1$, are definitely not strict contractions, and it is possible
that $1$ is eigenvalue for ${Y^{(l)}}^*Y^{(l)}$ and, hence, for
${\wY^{(l)*}} \wY^{(l)}=Y^{(l)}{Y^{(l)}}^*$. In this case the
invariant graph subspaces \eqref{GLs} of $L$ have a non-trivial
intersection, {$\fL_0^{(l)}\cap\fL_1^{(l)}\neq\{0\}$}.
The equality $\fL_0^{(l)}\cap\fL_1^{(l)}=\{0\}$ and the linear independence
of the invariant graph subspaces \eqref{GLs} is ensured provided it is
known that
\begin{equation}
\label{1YY}
\text{$1\not\in\spec({Y^{(l)}}^*Y^{(l)})$
(and, hence, $1\not\in\spec(Y^{(l)}{Y^{(l)}}^*)$)}.
\end{equation}
In the latter case we would have two versions of the direct decomposition
(see, e.g., \cite[Lemma 2.6 and Remark 2.7]{AMSh08})
\begin{equation}
\label{Hsum}
\fH=\fL_0^{(l)}\dotplus\fL_1^{(l)}, \quad l=\pm1.
\end{equation}
Furthermore, with respect to the decomposition
\eqref{Hsum} the operator $L$ would read as
the block diagonal matrix (see, e.g., \cite[Corollary 2.9]{AMSh08})
\begin{equation}
\label{LZZ}
Z=\diag(\wZ^{(l)},Z^{(l)}), \quad l=\pm1.
\end{equation}

What is a criterion for the situation \eqref{1YY} to take place is
an open problem. In the explicitly solvable example discussed below
in Section \ref{SSimp} we have just the opposite situation: $1$ is an
eigenvalue of both ${Y^{(+)}}^*Y^{(+)}$ and
${Y^{(-)}}^*Y^{(-)}$.

%%%%%%%%%%%%%%%%%%%%%%%%%%%%%%%%%%%%%%%%%%%%%%%%%%%%%%%%%%%%%%%%%%%
\section{Feshbach case}
\label{SFeshbach}
%%%%%%%%%%%%%%%%%%%%%%%%%%%%%%%%%%%%%%%%%%%%%%%%%%%%%%%%%%%%%%%%%%%
In the present section we consider the spectral situation that
resembles the Feshbach one in the case of self-adjoint block
operator matrices. Namely, we assume that the spectrum of the
self-adjoint operator $A_1$ only consists of isolated eigenvalues
of finite multiplicities and all these eigenvalues are embedded
into the absolutely continuous spectrum of the self-adjoint operator
$A_0$. If $A_1$ is bounded then the Hilbert space $\fA_1$ is
necessarily finite-dimensional and $A_1$ is finite rank.

We start the discussion of this case with a remark that the
operator-valued function $K_B(\mu)$ for $\mu\in\bbR$ is
non-decreasing. Hence, under our assumptions on analytic properities
of $K_B$, the derivative $K'_B(\mu)$ is a non-negative operator on
$\Delta_0$,
\begin{equation}
\label{Kbp}
\lal K'_B(\mu) x,x\ral\geq 0, \text{ for any }\mu\in\Delta_0 \,\,\text {and any }x\in\fA_1.
\end{equation}

To simply future references, we adopt the following hypothesis.

\begin{hypothesis}
\label{hyp0} Let $\dim(\fA_1)<\infty$ (and, hence, the spectrum
$\sigma_1$ of the operator $A_1$ consists only of a finite number of
isolated eigenvalues of finite multiplicities). Suppose that
$\sigma_1\subset\Delta_0$ and that there is an admissible contour
$\Gamma\subset D^\pm$ such that
$\cV_0(B,\Gamma)<\frac{1}{4}d^2(\Gamma)$. Furthermore, assume that
the derivative $K'_B(\mu)$ is strictly positive and uniformly
bounded from below on the intersection
$\Delta_0\cap\cO_{r_0}(\sigma_1)$ of the interval $\Delta_0$ with
the $r_0$-neighborhood of the spectrum of $A_1$, where $r_0\equiv
r_0(B)>0$ is given by \eqref{r0B}. That is, assume there is $c_0>0$
such that
\begin{equation}
\label{semb} \lal K'_B(\mu) x,x\ral\geq c_0\|x\|^2 \quad \text{for any
}\mu\in\Delta_0\cap\cO_{r_0}(\sigma_1) \,\,\text {and any }x\in\fA_1.
\end{equation}
\end{hypothesis}

Notice that the function $W_1(z)$ given by \eqref{W1z} is Herglotz. One
easily verifies that the following limiting equalities hold:
\begin{align}
\label{W1zlim}
\mathop{\rm Im} \lal W_1(\lambda\pm\ri
0)x,x\ral&:=\lim\limits_{\varepsilon\downarrow 0}\lal
W_1(\lambda\pm\ri \varepsilon)x,x\ral=\pm\pi\lal K'_B(\lambda)
x,x\ral,\\
\nonumber
&\quad \text{for any}\,\,\lambda\in\Delta_0\,\,\text {and any
}x\in\fA_1.
\end{align}

\begin{lemma}
\label{lem1} Assume Hypothesis \ref{hyp0} and let $\Gamma^l$,
$l=\pm1$, be an admissible contour satisfying condition
$\cV_0(B,\Gamma^l)<\frac{1}{4}d^2(\Gamma^l)$. Let $X^{(l)}$ be the corresponding unique
solution of \eqref{MainEqC} mentioned in Theorem \ref{Solvability}
and set $Z^{(l)}=A_1+X^{(l)}$. The spectrum of $Z^{(l)}$ only
consists of finite number of isolated eigenvalues of finite
(algebraic) multiplicities and none of these eigenvalues is real.
\end{lemma}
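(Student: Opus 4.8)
The plan is to dispatch the two assertions separately. Since Hypothesis~\ref{hyp0} forces $\dim(\fA_1)<\infty$, the first claim is immediate: $Z^{(l)}=A_1+X^{(l)}$ is a bounded operator on a finite-dimensional space, so $\spec(Z^{(l)})$ is automatically a finite set and every spectral point is an isolated eigenvalue of finite algebraic multiplicity. All the content of the lemma thus resides in the second claim, the absence of real eigenvalues, which I would prove by contradiction.

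So suppose $\lambda\in\bbR$ is an eigenvalue of $Z^{(l)}$ with eigenvector $u_1\neq 0$. By \eqref{zZ} this means $\lambda$ is a zero of the continued transfer function, $M_1(\lambda,\Gamma^l)u_1=0$. The first task is to pin $\lambda$ down inside the \emph{open} interval $\Delta_0$. By Corollary~\ref{SpHalfVic} one has $\dist(\lambda,\sigma_1)\le r_0\le r_{\rm min}(\Gamma^l)<\tfrac12 d(\Gamma^l)$, while the contour $\Gamma^l$ has its endpoints pinned at the (finite) endpoints $\mu_0^{(k)}$ of $\Delta_0$, so that $d(\Gamma^l)=\dist(\sigma_1,\Gamma^l)\le\dist(\sigma_1,\mu_0^{(k)})$. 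A triangle-inequality estimate then keeps $\lambda$ strictly between the endpoints, i.e. $\lambda\in\Delta_0\cap\cO_{r_0}(\sigma_1)$, which is exactly the set on which the lower bound \eqref{semb} of Hypothesis~\ref{hyp0} is assumed.

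The analytic heart of the argument is to identify $M_1(\lambda,\Gamma^l)$, for real $\lambda\in\Delta_0$, with a boundary value of the physical-sheet transfer function. Since $W_1(\,\cdot\,,\Gamma^l)$ is holomorphic across $\Delta_0$ and agrees with $W_1$ off $\overline{D(\Gamma^l)}$, and since $D(\Gamma^l)$ sits on the $\bbC^l$-side of $\Delta_0$, approaching $\lambda$ from the opposite half-plane gives $M_1(\lambda,\Gamma^l)=M_1(\lambda-\ri l0)$; this is consistent with \eqref{Sheets} together with the jump $W_1(\lambda+\ri0)-W_1(\lambda-\ri0)=2\pi\ri\,K'_B(\lambda)$. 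Taking the imaginary part of the scalar identity $\lal M_1(\lambda,\Gamma^l)u_1,u_1\ral=0$, observing that $\lal(A_1-\lambda)u_1,u_1\ral$ is real, and inserting \eqref{W1zlim}, I obtain $-l\pi\lal K'_B(\lambda)u_1,u_1\ral=0$, hence $\lal K'_B(\lambda)u_1,u_1\ral=0$. This contradicts \eqref{semb}, which forces $\lal K'_B(\lambda)u_1,u_1\ral\ge c_0\|u_1\|^2>0$ on $\Delta_0\cap\cO_{r_0}(\sigma_1)$, and the contradiction proves the claim.

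I expect the main obstacle to be the boundary-value identification with the correct sign bookkeeping in $l$, together with the check that a putative real eigenvalue cannot escape to an endpoint or outside $\overline{\Delta}_0$---which is precisely why the strict positivity in \eqref{semb} is demanded on the set $\Delta_0\cap\cO_{r_0}(\sigma_1)$ rather than on all of $\cO_{r_0}(\sigma_1)$.
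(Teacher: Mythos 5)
Your proposal is correct and follows essentially the same route as the paper: finite-dimensionality of $\fA_1$ disposes of the first claim, and a putative real eigenvalue $\lambda$ is excluded by identifying $M_1(\lambda,\Gamma^l)$ with the boundary value of the physical-sheet Schur complement from $\bbC^{-l}$, taking the imaginary part of the eigenvalue relation \eqref{zZ}, and invoking \eqref{W1zlim} together with the strict positivity \eqref{semb} on $\Delta_0\cap\cO_{r_0}(\sigma_1)$ (where $\lambda$ must lie by Corollary \ref{SpHalfVic}). Your extra care in checking that $\lambda$ cannot sit at or beyond an endpoint of $\Delta_0$ is a detail the paper leaves implicit, but it does not change the argument.
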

\begin{proof}
Since the space $\fA_1$ is finite-dimensional, the spectrum of
$Z^{(l)}$ is automatically formed only of isolated eigenvalues with
finite algebraic multiplicities and the number of these eigenvalues
is finite.

Suppose that $u_1\in\fA_1$, $\|u\|=1$, is an eigenvector of
$Z^{(l)}$, $l=\pm1$, corresponding to an eigenvalue $z$, i.e.
$Z^{(l)}u_1=zu_1$. From \eqref{zZ} it follows that
\begin{equation}
\label{zZl}
z=\lal A_1u_1,u_1\ral+\lal W_1(z,\Gamma^l)u_1,u_1\ral.
\end{equation}
One proves that $\Img z\neq 0$ by contradiction. Indeed, assume the opposite, i.e. that $z=\lambda\in\bbR$.
Clearly, we have
\begin{equation}
\label{eqz}
W_1(\zeta,\Gamma^l)=W_1(\zeta)\quad\text{whenever }  \zeta\in\bbC^{-l}, \quad l=\pm1.
\end{equation}
Combining \eqref{eqz} with \eqref{W1zlim} one observes that
\begin{equation}
\label{eqz1}
\mathop{\rm Im} \lal W_1(\lambda,\Gamma^l)u_1,u_1\ral
=\mp l\pi\lal K'_B(\lambda)u_1,u_1\ral\quad
 \text{for any}\,\,\lambda\in\Delta_0.
\end{equation}
In view of Corollary \ref{SpHalfVic},
from \eqref{semb} and \eqref{eqz1} it follows
that
\begin{equation}
\label{eqz2}
|\mathop{\rm Im} \lal W_1(\lambda,\Gamma^l)u_1,u_1\ral|\geq \pi c_0>0.
\end{equation}
Together with $\lal A_1u_1,u_1\ral\in\bbR$ this means that for
$z=\lambda\in\bbR$ the equality \eqref{zZl} is impossible. Thus one
concludes that $Z^{(l)}$ has only non-real eigenvalues, completing
the proof.
\end{proof}

\begin{remark}
\label{remB} Given $l=\pm1$, there is an open neighborhood of the
interval $\bigl(\min\sigma_1,\max\sigma_1\bigr)$ in $\bbC$ that
contains no spectrum of $Z^{(l)}$. This follows from the fact that,
by the continuity argument, at some complex neighborhood of the set
$\cO_{r_0}(\sigma_1)\cap\Delta_0$ the imaginary part of
$W_1(z,\Gamma_1)$ should remain uniformly definite, keeping the same
respective sign that it had on
$\cO_{r_0}(\sigma_1)\cap\Delta_0$.
\end{remark}

In the Feshbach case under consideration, the spectrum of $Z^{(l)}$
represents a part of the ``usual'' spectrum of the block operator
matrix $L$. In other words, unlike in the case of self-adjoint
off-diagonal $V$ in \cite{MennMot,MM-OTAA}, the spectrum of
$Z^{(l)}$ contains no resonances. This is established in the
following lemma.

\begin{lemma}
\label{RemS} Under Hypothesis  \ref{hyp0}, the operator $Z^{(l)}$,
$l=\pm1$, has no spectrum in the corresponding complex domain $D^l$
and, thus, all the eigenvalues of $Z^{(l)}$ are simultaneously
eigenvalues of the original (not yet continued) Schur complement
$M_1(\cdot)$ and, hence, the eigenvalues of $L$.
\end{lemma}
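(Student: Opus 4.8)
The plan is to prove the sharper statement that the whole spectrum of $Z^{(l)}$ lies in the open half-plane $\bbC^{-l}$ opposite to the continuation domain $D^l\subset\bbC^l$; since $D^l\subset\bbC^l$, this at once gives $\spec(Z^{(l)})\cap D^l=\emptyset$. First I would collect the available information: by Lemma \ref{lem1} the set $\spec(Z^{(l)})$ is finite, consists of non-real eigenvalues, and by Corollary \ref{SpHalfVic} it is contained in the fixed neighbourhood $\cO_{r_0}(\sigma_1)$. Choosing $r_0$ small enough that $\cO_{r_0}(\sigma_1)\cap\bbR\subset\Delta_0$ and that \eqref{semb} applies, the two half-plane pieces $\cO_{r_0}(\sigma_1)\cap\bbC^{\pm}$ are separated by the real segment $\cO_{r_0}(\sigma_1)\cap\bbR\subset\Delta_0$, and each eigenvalue lies in one of them.

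To decide the side, let $z$ be an eigenvalue with normalized eigenvector $u_1$; by \eqref{zZ} one has $\Img z=\Img\lal W_1(z,\Gamma^l)u_1,u_1\ral$. For a real $\lambda\in\Delta_0\cap\cO_{r_0}(\sigma_1)$ the identity \eqref{eqz} lets one evaluate $W_1(\lambda,\Gamma^l)$ as the boundary value of $W_1$ taken from the $\bbC^{-l}$ side, so \eqref{W1zlim} together with \eqref{semb} shows that $\Img\lal W_1(\lambda,\Gamma^l)u_1,u_1\ral$ has the sign of $-l$ and modulus at least $\pi c_0>0$. Hence, as recorded in Remark \ref{remB}, in a complex neighbourhood of $\Delta_0\cap\cO_{r_0}(\sigma_1)$ the quantity $\Img\lal W_1(z,\Gamma^l)u_1,u_1\ral$ keeps the sign $-l$, which forces every eigenvalue lying near $\Delta_0$ into $\bbC^{-l}$ and away from $D^l$.

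The hard part is to promote this near-axis conclusion to all of $D^l\cap\cO_{r_0}(\sigma_1)$: by \eqref{Sheets} one has $\Img\lal W_1(z,\Gamma^l)u_1,u_1\ral=\Img\lal W_1(z)u_1,u_1\ral-2\pi l\,\Real\lal K'_B(z)u_1,u_1\ral$, and the first (Herglotz) term carries the sign of $\Img z$, i.e. of $l$, so for $z$ far from $\Delta_0$ it may compete with the $-l$-signed second term and a pointwise sign count becomes inconclusive. I would remove this obstacle by a homotopy in the coupling. Replacing $B$ by $tB$, $t\in[0,1]$, one has $K_{tB}=t^2K_B$, so both \eqref{semb} and the hypothesis of Theorem \ref{Solvability} persist for every $t$ (with $r_0(tB)\le r_0(B)$), the solution $X^{(l)}_t$ and hence $Z^{(l)}_t=A_1+X^{(l)}_t$ depend continuously on $t$, and for each $t\in(0,1]$ Lemma \ref{lem1} keeps $\spec(Z^{(l)}_t)$ off $\bbR$ while Corollary \ref{SpHalfVic} keeps it inside the fixed set $\cO_{r_0(B)}(\sigma_1)$. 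A continuous family of finite spectra that avoids $\bbR$ and stays in a set whose real trace lies in $\Delta_0$ cannot move between the two half-plane components; as $t\downarrow 0$ the eigenvalues approach $\sigma_1$ and, by the boundary sign just computed (a Fermi-golden-rule effect), enter through $\bbC^{-l}$. Consequently $\spec(Z^{(l)}_t)\subset\bbC^{-l}$ for all $t$, in particular for $t=1$, which proves the first assertion.

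Finally I would read off the remaining claims. For $z\in\bbC^{-l}$ the identity \eqref{eqz} gives $W_1(z,\Gamma^l)=W_1(z)$, hence $M_1(z,\Gamma^l)=M_1(z)$, so the relation $M_1(z,\Gamma^l)u_1=0$ furnished by \eqref{zZ} becomes $M_1(z)u_1=0$: every eigenvalue of $Z^{(l)}$ is an eigenvalue of the un-continued Schur complement $M_1(\cdot)$. Since $z$ is non-real and $\sigma_0\subset\bbR$, we have $z\notin\sigma_0$, and the Frobenius--Schur factorization \eqref{SchFa} then shows that $-(A_0-z)^{-1}Bu_1\oplus u_1$ is an eigenvector of $L$ at $z$; thus each eigenvalue of $Z^{(l)}$ is simultaneously an eigenvalue of $L$, completing the proof.
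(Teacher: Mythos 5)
Your proposal is correct and follows essentially the same route as the paper: a homotopy $B\mapsto tB$ (equivalently $L_t=A+tV$, with the basic equation $X=t^2W_1(A_1+X,\Gamma^l)$), combined with the non-reality of $\spec(Z^{(l)}_t)$ from Lemma \ref{lem1}, the confinement from Corollary \ref{SpHalfVic}, the boundary-value sign computation \eqref{eqz1}--\eqref{eqz2} to place the eigenvalues in $\bbC^{-l}$ for small $t$, and the continuity of the finite system of eigenvalues to rule out a jump into $D^l$. Your explicit explanation of \emph{why} a pointwise sign count fails away from the real axis, and the final Frobenius--Schur step identifying eigenvalues of $Z^{(l)}$ with eigenvalues of $M_1(\cdot)$ and of $L$, merely spell out details the paper leaves implicit.
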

\begin{proof}
Consider the path of $J$-self-adjoint operators $L_t=A+t V$,
$t\in[0,1]$, $\Dom(L_t)=\Dom(A)$. With this path we associate the
corresponding path of the (unique) solutions $X_t^{(l)}$ (from Theorem \ref{Solvability}) to the
respective transformator equations
\begin{equation}
\label{MainEqt}
X=t^2W_1(A_1+X,\Gamma^l).
\end{equation}
as well as the path $Z^{(l)}_t=A_1+X_t^{(l)}$,\,\, $t=[0,1]$.
Obviously, we have $r_0(tB)\to 0$ as $t\to 0$ where the radius $r_0$
is given by \eqref{r0B}. Using the same reasoning as in the proof of
Lemma \ref{lem1}, by Corollary \eqref{SpHalfVic} we then conclude
that $\spec(Z^{(l)}_t)$ lies in $\bbC^{-l}$ and hence
\begin{equation}
\label{ZlD}
\spec(Z^{(l)}_t)\cap D^l=\emptyset
\end{equation}
at least for sufficiently small $t$. Furthermore, by Remark
\ref{remB} we have the strict separation of the spectrum of
$Z^{(l)}_t$ from the real axis at every $0<t\leq 1$.
The solution $X_t^{(l)}$ is a real analytic and, hence, norm continuous
in $t\in[0,1]$. For varying $t$, we then apply the result
on the continuity of finite systems of eigenvalues (see
\cite[Section IV.5]{Kato}) to conclude that the eigenvalues of
$Z_t^{(l)}$ can not jump from $\bbC^{-l}$ to $D^l$.
\end{proof}

%%%%%%%%%%%%%%%%%%%%%%%%%%%%%%%%%%%%%%%%%%%%%%%%%%%%%%%%%%%%%%%%%%%
\section{The simplest example}
\label{SSimp}
%%%%%%%%%%%%%%%%%%%%%%%%%%%%%%%%%%%%%%%%%%%%%%%%%%%%%%%%%%%%%%%%%%%
In this section we consider the operator
matrix $L$ of the form \eqref{L} where $A_0$
is the operator of multiplication by independent variable,
\begin{equation}
\label{A0mult}
 (A_0 u_0)(\mu)=\mu\, u_0(\mu),
\end{equation}
on $\fA_0=L_2(-\alpha,\alpha)$,\, $0<\alpha<+\infty$. The spectrum of $A_0$ is
absolutely continuous and fills the interval $[-\alpha,\alpha]$. We assume
that $\fA_1=\bbC$ and, thus, $A_1$ is the multiplication by
a real number $a_1$
\begin{equation}
\label{A1mult}
A_1 u_1=a_1 u_1, \quad u_1\in\bbC.
\end{equation}
The latter should lie inside the continuous spectrum of $A_1$, i.e.
%%\begin{equation}
$a_1\in(-\alpha,\alpha)$.
%%\end{equation}
The coupling operator $B:\fA_1\to\fA_0$ is assumed to be
the multiplication by another constant $b\geq 0$, namely
\begin{equation}
\label{Bb}
(Bu_1)(\mu):=bu_1,\quad \mu\in[-\alpha,\alpha]
\end{equation}
(that is, $Bu_1$ is constant function on $[-\alpha,\alpha]$). Obviously, the adjoint operator $B^*$
is given by
\begin{equation}
\label{Bst}
B^*u_0=b\int_{-\alpha}^{\alpha} u_0(\mu)d\mu.
\end{equation}

The self-adjoint analog of such an operator matrix $L$ (with the
lower left entry in \eqref{L} replaced by simply $B^*$) as
well as somewhat more complex self-adjoint operator matrices have
been discussed in detail in \cite[Section 8]{MennMot}. Notice that
the self-adjoint analog of $L$ represents a particular case of one
of the celebrated Friedrichs models \cite{Fried}.

The spectral function $\sE^0(\mu)$, $\mu\in\bbR$, of the
multiplication operator $A_0$ is given by (see,
e.\,g.,~\cite{BirmanSolomiak})
\begin{equation}
\label{MeasureMult}
\bigl(\sE^0(\mu)u_0\bigr)(\nu)=
\chi_{(-\infty,\mu)\cap[-\alpha,\alpha]}(\nu)u_0(\nu), \quad u_0\in\fA_0,
%%%   \,\,\text{for a.e.\,\,}\nu\in[0,\alpha],
\,\,\nu\in[-\alpha,\alpha],
\end{equation}
where $\chi_{\delta}(\cdot)$ denotes the characteristic function
(indicator) of a Borel set $\delta\subset\bbR$; $\chi_{\delta}(\mu)=1$ if $\mu\in\delta$
and $\chi_{\delta}(\mu)=0$ if $\mu\in\bbR\setminus\delta$.
Hence, the product \eqref{KB} for $-\alpha\leq\mu\leq \alpha$
reads
$$
    K_B(\mu)=b^2\int_{-\alpha}^\mu d\nu=b^2(\mu+\alpha),
$$
and the derivative $K'_B(\mu)$, $\mu\in(-\alpha,\alpha)$, is simply the
constant, $K'_B(\mu)=b^2$, admitting analytic continuation anywhere
on the complex plane. Thus, in the case under consideration one can
choose the whole half-plane $\bbC^+$ as $D^+$ and the whole
half-plane $\bbC^-$ as $D^-$.

The Schur complement \eqref{M1} reads
\begin{equation}
\label{M1in}
M_1(z)=a_1-z+b^2\int_{-\alpha}^{\alpha} \frac{d\mu}{\mu-z} \quad\text{for \,} z\in\bbC\setminus[-\alpha,\alpha],
\end{equation}
while its values $M_1(\lambda\pm\ri 0)$,\, $\lambda\in(-\alpha,\alpha)$,\, on the
rims of the cut are defined as the respective limits in
$z=\lambda\pm\ri\varepsilon$ as $\varepsilon\searrow 0$. The
corresponding continuations \eqref{Mcmpl} are given by
\begin{equation}
\label{M1e} M_1(z,\Gamma^l)=a_1-z+b^2\int_{\Gamma^l}
\frac{d\mu}{\mu-z}, \quad z\in\bbC\setminus\Gamma^l,\quad l=\pm1.
\end{equation}
In this case the basic equation \eqref{MainZ} coincides with the equation
$M_1(z,\Gamma^\pm)=0$ and the solutions $Z^{(\pm)}$ if they
exist are simply the numbers, $Z^{(\pm)}=z^{(\pm)}\in\bbC$.
One easily verifies by inspection that the function $M_1(z)$ does not have
real roots $z\in\bbR\setminus[-\alpha,\alpha]$ and, surely, the same holds true
for the functions \eqref{M1e}.
Furthermore, for $b>0$ none of the functions \eqref{M1in} and \eqref{M1e} has roots in $(-\alpha,\alpha)$
since
$$
\mathop{\rm Im}  M_1(\lambda,\Gamma^\pm)=\mathop{\rm Im}  M_1(\lambda\mp{\rm i}0)=\mp\pi  b^2\quad\text{whenever\, }
\lambda\in(-\alpha,\alpha).
$$
Finally, one notices that $z=-\alpha$ and $z=\alpha$ are branching
points for the Riemann surface of $M_1(\cdot)$.

%Obviously,
%$$
%\cV_0(B,\Gamma^\pm)=b^2\,\ell_{\Gamma^\pm},
%$$
%where $\ell_{\Gamma^l}:=\int_{\Gamma^l}|d\mu|$ denotes the length of the respective curve $\Gamma^l$,
%$l=\pm1$.

For the remaining part of the section we set
\begin{equation}
\label{a1eq0}
a_1=0
\end{equation}
and as
${\Gamma^\pm}$ take semi-circumferences, ${\Gamma^\pm}=\{z:\,
|z|=\alpha,\, z\in{\C}^\pm\}$. Then, obviously,
$d:=d(\Gamma^\pm)=\alpha$ and $\cV_0(B,\Gamma^\pm)=\pi\,b^2
\alpha$. The condition \eqref{Best} acquires the form
\begin{equation}
\label{Bestb}
b^2<\frac{\alpha}{4\pi}
\end{equation}
and, given the sign $l=\pm1$, Theorem \ref{Solvability} ensures
the existence of a unique solution $Z^{(l)}=z^{(l)}$ to the
basic equation  \eqref{MainZ} in the open circle $|z|<r_{\rm
max}$,  $z\in{\C}$, where
\begin{equation}
\label{rmax}
r_{\rm max} = d-\sqrt{\cV_0(B,\Gamma^\pm)}=
\alpha-\sqrt{\pi b^2\alpha }>\displaystyle\frac{\alpha}{2}.
\end{equation}
Theorem \ref{Solvability} also guarantees that, in
fact, the unique solution $Z^{(l)}=z^{(l)}$ belongs to the smaller
circle $|z|\leq r_{\rm min}$, $z\in{\C}$, where
\begin{equation}
\label{rmin}
r_{\rm min}=\displaystyle\frac{d}{2}-
\sqrt{\displaystyle\frac{d^2}{4}-\cV_0(B,\Gamma^\pm)}=
\displaystyle\frac{\pi b^2}{\frac{1}{2}+\sqrt{\frac{1}{4}-\frac{\pi b^2}{\alpha}}}
<\displaystyle\frac{\alpha}{2}.
\end{equation}
By Lemmas \ref{lem1} and \ref{RemS} one then concludes that
$\Img z^{(+)}< 0$ and $\Img z^{(-)}>0$ and, thus, that $z^\pm$ are the roots
of the original (not continued) Schur complement $M_1(\cdot)$ given by \eqref{M1in}.
Under the assumption \eqref{a1eq0}, an elementary inspection shows that, for the function  $M_1(\cdot)$,
the roots $z^{(\pm)}$ are purely imaginary,
\begin{equation}
\label{zmpy}
z^{(-)}=-z^{(+)}=\ri y \quad\text{with \,} y>0.
\end{equation}
Moreover, each of these two roots is unique for the upper and lower
half-planes $\bbC^+$ and $\bbC^-$, respectively. Obviously, the
equation $M_1(z^{(\mp)})=0$ with $M_1(\cdot)$ given by \eqref{M1in}
for $a_1=0$ and  $b>0$ is equivalent to
\begin{equation}
\label{M1y1}
1=b^2\int_{-\alpha}^\alpha \frac{d\mu}{\mu^2+y^2},\quad y>0.
\end{equation}
Evaluating the integral in \eqref{M1y1} one finds that $y$ is
the unique positive solution to the equation
\begin{equation}
\label{yatn}
y=2b^2 \arctan\frac{\alpha}{y}
\end{equation}
and for the existence of this solution no smallness requirement like \eqref{Bestb}
is needed: In fact, the unique positive solution to \eqref{yatn} and, hence, the corresponding
unique roots \eqref{zmpy} to the Schur complement \eqref{M1in} exist for any $\alpha>0$ and
$b^2>0$.

According to \eqref{Ysol} and \eqref{MeasureMult}, for the angular
operators $Y^{(l)}$, $l=\pm1$, associated with the above two
solutions $Z^{(l)}=z^{(l)}$ we have
\begin{equation}
\label{Yz}
(Y^{(\pm)}u_1)(\mu)=\frac{b}{z^{(\pm)}-\mu}u_1=-\frac{b}{\mu\pm\ri
y}u_1 \quad\text{for any \,}\mu\in[-\alpha,\alpha]\text{\, and \, }u_1\in\fA_1=\bbC,
\end{equation}
where $y$ is the unique positive solution of \eqref{yatn}. Obviously, from \eqref{Yz} for the norm
of $Y^{(l)}$ one infers
\begin{equation}
\label{Ynorm}
\|Y^{(l)}\|=b\left(\int_{-\alpha}^\alpha \frac{d\mu}{\mu^2+y^2}\right)^{1/2}=1, \quad l=\pm1,
\end{equation}
where the equality \eqref{M1y1} has been taken into account at the last step.
\medskip

%REFERENCES to be single spaced
\baselineskip=12pt

%%%%%%%%%%%%%%%%%%%%%%%%%%%%%%%%%%%%%%%%%%%%%%%%%%%%%%%%%%%%%%%%%%%
%%%%%%%%%%%%%%%%%%%%%%%%%%%%%%%%%%%%%%%%%%%%%%%%%%%%%%%%%%%%%%%%%%%

%%%\noindent{\small AMS Classification Numbers: Primary 47A56, 47Nxx;
%%%                                           Secondary 47N50, 47A40.}

\end{document}